\newcommand\blfootnote[1]{%
  \begingroup
  \renewcommand\thefootnote{}\footnote{#1}%
  \addtocounter{footnote}{-1}%
  \endgroup
}
\newcommand{\ddb}{\sqrt{-1}\partial\overline{\partial}}
\newcommand{\dbar}{\overline{\partial}}
\renewcommand{\[}{\begin{equation} \begin{aligned} }
      \renewcommand{\]}{\end{aligned} \end{equation}}
\renewcommand{\phi}{\varphi}
\newtheorem{thm}{Theorem}
\newtheorem{prop}[thm]{Proposition}
\newtheorem{lem}[thm]{Lemma}
\newtheorem{cor}[thm]{Corollary}
\theoremstyle{definition}
\newtheorem{rem}[thm]{Remark}
\numberwithin{equation}{section}
\author{Gang Liu, G\'abor Sz\'ekelyhidi}
\address{Department of Mathematics, Northwestern University, Evanston, IL 60208}
\email{gangliu@math.northwestern.edu}
\address{Department of Mathematics, University of Notre Dame, Notre Dame, IN 46556}
\email{gszekely@nd.edu}
\title[Gromov-Hausdorff limits]{Gromov-Hausdorff limits of K\"ahler
  manifolds with Ricci curvature bounded below, II}
\date{}
\begin{document}
\begin{abstract}
We study non-collapsed Gromov-Hausdorff limits of K\"ahler manifolds
with Ricci curvature bounded below. Our main result is that each
tangent cone is homeomorphic to a normal affine variety. This extends
a result of Donaldson-Sun, who considered non-collapsed limits of polarized
K\"ahler manifolds with two-sided Ricci curvature bounds. 
\end{abstract}
\maketitle
\section{Introduction}
\blfootnote{The first author is partially supported by NSF grant DMS-1709894 and the Alfred P. Sloan Foundation. The second author is supported in part by NSF grant DMS-1350696}
Consider a sequence $(M_i, \omega_i, p_i)$ of pointed complete K\"ahler
manifolds of dimension $n$, with $\mathrm{Ric}(\omega_i) > -\omega_i$, and
$\mathrm{Vol}(B(p_i,1)) > \nu > 0$. Suppose that the sequence
converges to a metric space $(Z, d, p)$ in the pointed
Gromov-Hausdorff sense. By the work of
Cheeger-Colding~\cite{CC2}, and more recently
Cheeger-Jiang-Naber~\cite{CJN} and 
others, we have a detailed understanding of the structure of $Z$, even
if the $M_i$ are merely Riemannian. A starting point for this structure
theory is Cheeger-Colding's result~\cite{CC} that the limit space $Z$
admits tangent cones at each point, that are
metric cones. In this paper we are interested in studying the
additional structure of the tangent cones of $Z$ in the K\"ahler case. 

There are few general
results that exploit the K\"ahler condition: by
Cheeger-Colding-Tian~\cite{CCT}, the tangent cones are of the form
$C(Y)\times \mathbf{C}^k$, where $C(Y)$ does not split off a factor of
$\mathbf{R}$, while the first author~\cite{Liu1} showed that each
tangent cone admits a one-parameter group of isometries. Under
two-sided Ricci curvature bounds it follows from Anderson~\cite{And}
that the regular set in $Z$ is a K\"ahler manifold. In our
previous work~\cite{LiuSz} we showed that under Ricci lower bounds,
 for sufficiently small
$\epsilon > 0$ the $\epsilon$-regular set $\mathcal{R}_\epsilon\subset
Z$ has the structure of a complex manifold, although the metric may not be
smooth.  Here $x\in \mathcal{R}_\epsilon$ if 
$V_{2n} - \lim_{r\to 0} r^{-2n} \mathrm{vol}(B(x,r)) <
\epsilon$, where $V_{2n}$ is the volume of the Euclidean unit
ball. An important problem is whether the complex structure can be
extended across the singular set of $Z$ to give it the structure of an
analytic space. When the $(M_i,\omega_i)$ are polarized,
i.e. the $\omega_i$ are curvature forms of line bundles, then
Donaldson-Sun~\cite{DS1} (under two sided Ricci bounds), and the
authors~\cite{LiuSz} (under just lower Ricci bounds), showed that this
is the case. Without polarizations the question is still open, however
we have the following. 

\begin{thm}\label{thm:main}
  Every tangent cone of $Z$ is homeomorphic to a normal
  affine algebraic variety, such that under a suitable embedding into
  $\mathbf{C}^N$, the homothetic action on the tangent cone extends to
  a linear torus action.
\end{thm}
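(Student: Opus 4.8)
The plan is to realize a fixed tangent cone $C$ of $Z$ as a normal affine variety by constructing enough holomorphic functions of polynomial growth on its regular part. Write $C = C(Y)\times\mathbf{C}^k$ with vertex $o$, where by Cheeger--Colding--Tian \cite{CCT} the factor $C(Y)$ splits off no line; recall that $C$ is a pointed Gromov--Hausdorff limit of rescalings $(M_i,\lambda_i^2\omega_i,q_i)$ with $\lambda_i\to\infty$, so that by \cite{LiuSz} the $\epsilon$-regular set $\mathcal{R}(C)$ carries a smooth Kähler cone metric, the homothety $\mathbf{R}_{>0}$ generated by the Euler field $\xi=r\,\partial_r$ acts holomorphically, and by \cite{Liu1} the Reeb field $J\xi$ is Killing. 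Let $T\subset\mathrm{Isom}(C)$ be the compact torus generated by the closure of the flow of $J\xi$ together with the rotations of the $\mathbf{C}^k$-factor; it acts holomorphically and commutes with $\mathbf{R}_{>0}$. The goal is to produce a holomorphic embedding of $\mathcal{R}(C)$ into $\mathbf{C}^N$ by functions that are homogeneous under $\mathbf{R}_{>0}$ and eigenfunctions under $T$, whose image is Zariski-dense in a normal affine variety $W$, and to show that the embedding extends to a homeomorphism $C\to W$; since the homothety then acts diagonally on $\mathbf{C}^N$, its Zariski closure is a linear algebraic torus preserving $W$, of which the homothetic action is the restriction.

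The analytic heart is the construction of these functions. On $(M_i,\lambda_i^2\omega_i)$ the Ricci lower bound becomes $\mathrm{Ric}\geq -\lambda_i^{-2}\omega_i\to 0$, and I would solve $\dbar u = v$ via Hörmander's weighted $L^2$ estimate, using a weight $\phi_i$ modeled on a multiple of $\log(1+r^2)$: on the cone $\ddb\log(1+r^2)\geq 0$ (essentially the pullback of the transverse Kähler form of the link), so on $M_i$ one needs a genuine plurisubharmonic function with this asymptotic behaviour, which I would produce from the distance function to $q_i$ by Laplacian and Hessian comparison together with heat-kernel smoothing, using the convergence of the complex structures established in \cite{LiuSz}. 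Solving $\dbar$ against data cut off on the $\epsilon$-regular part of $M_i$ and passing to the limit, using Cheeger--Colding volume convergence and gradient estimates, yields holomorphic functions of polynomial growth on $\mathcal{R}(C)$. Averaging over $T$ and decomposing by homogeneity degree gives a ring $\mathcal{O}=\bigoplus_{\alpha}\mathcal{O}_\alpha$ of such functions, where by a three-circle and monotonicity argument on the cone (together with the spectral theory of the link) the degrees $\alpha$ form a discrete set and each $\mathcal{O}_\alpha$ is finite-dimensional. The key quantitative input is a partial-$C^0$-type estimate adapted to the cone: for every $x\in\mathcal{R}(C)$, and for every pair of nearby points or tangent vectors at $x$, there is $f\in\bigoplus_{\alpha\leq\alpha_0}\mathcal{O}_\alpha$ with controlled $L^2$-norm realizing a prescribed value, derivative, or separation, with $\alpha_0$ independent of $x$; this follows from running the $\dbar$-construction localized near points of the $\epsilon$-regular set where the geometry is Euclidean to arbitrary precision.

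The remaining steps are structural. The partial-$C^0$ estimate bounds the degrees needed to separate points and tangent directions, so the subring $R$ generated by $\bigoplus_{\alpha\leq\alpha_0}\mathcal{O}_\alpha$ is finitely generated; put $W=\mathrm{Spec}\,R$, and in the embedding $W\hookrightarrow\mathbf{C}^N$ given by a system of homogeneous eigenfunction generators the homothety acts diagonally, with Zariski closure a linear algebraic torus acting on $W$. The generators define a map $F:\mathcal{R}(C)\to\mathbf{C}^N$ which is holomorphic, equivariant, proper (the norm $|F|$ is comparable to a power of $r$, by homogeneity and the polynomial-growth bounds), and injective with injective differential, hence a biholomorphism of $\mathcal{R}(C)$ onto a smooth locally closed subvariety whose closure is $W$. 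Since polynomial-growth holomorphic functions on $\mathcal{R}(C)$ satisfy a gradient estimate and are Lipschitz for the metric $d$, the map $F$ extends continuously to $F:C\to W$; it remains proper, and it remains injective because any two points of $C$ at positive distance are separated --- for points of $\mathcal{R}(C)$ by the partial-$C^0$ estimate, and in general by approximation since $C\setminus\mathcal{R}(C)$ is closed and nowhere dense --- so $F$ is a homeomorphism of $C$ onto $W$. Finally, $W$ is normal: replacing $W$ by its normalization changes nothing over $\mathcal{R}(C)$, whose complement corresponds to a subvariety of complex codimension at least two by the Kähler structure theory, and since $\mathcal{R}(C)$ is a smooth complex manifold the ring of polynomial-growth holomorphic functions on it is integrally closed, so $R$ already equals its integral closure.

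The main obstacle is the non-polarized function theory of the second paragraph: with only a lower Ricci bound there is no line bundle whose sections one could take, so the holomorphic functions must be produced by solving $\dbar$ directly on the rescaled sequence with weights modeled on $\log r$, and one must extract uniform $L^2$ and $C^0$ estimates from Cheeger--Colding theory and the $\epsilon$-regularity of \cite{LiuSz}, with enough control on the complex structures along the sequence to know the limiting functions are holomorphic on $\mathcal{R}(C)$. Establishing the partial-$C^0$-type estimate --- producing enough eigenfunctions of bounded degree to separate points and tangent directions and to generate a finitely generated ring --- is where these ingredients must be combined most delicately; the subsequent finite generation, properness, homeomorphism, and normality statements, while not automatic, are of a more routine character once the functions are in hand.
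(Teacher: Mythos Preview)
Your overall architecture---produce polynomial-growth holomorphic functions via a H\"ormander estimate, decompose them under the torus into homogeneous pieces, separate points and tangents, embed, and take the affine closure---matches the paper. The gap is in the analytic core, and it is exactly the point the paper singles out as the ``crucial difference'' from prior work.

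You propose to solve $\dbar$ on the rescaled manifolds $(M_i,\lambda_i^2\omega_i)$ with a weight $\phi_i$ that is ``a genuine plurisubharmonic function'' modeled on $\log(1+r^2)$, obtained from the distance function by ``Laplacian and Hessian comparison together with heat-kernel smoothing'', and then pass the resulting holomorphic functions to the limit. With only a lower Ricci bound there is no Hessian comparison, and the Cheeger--Colding theory gives only $L^2$ control on $\nabla^2 r_i^2$ (cf.\ \eqref{eq:CC}); heat-kernel smoothing does not upgrade this to a pointwise bound. So there is no mechanism for producing a globally, pointwise plurisubharmonic weight on $B(p_i,R)$, and without one the H\"ormander estimate on $M_i$ does not run. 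The paper is explicit that its holomorphic functions on the cone are \emph{not} limits of holomorphic functions on the $M_i$. Instead it proves the $L^2$-estimate \emph{directly on the tangent cone} (Proposition~\ref{prop:L2existence}): one transplants the compactly supported test form $\eta$ to $B(p_i,R)$, builds a weight $\phi_i$ that is pointwise psh only on a neighborhood of $\mathrm{supp}\,\eta\subset\mathcal{R}_\epsilon$ and merely $L^2$-psh elsewhere, decomposes $v_i=*_{\phi_i}\eta_i$ as $v_i^{(1)}+v_i^{(2)}$ with $\dbar v_i^{(1)}=0$, and uses a Moser iteration (Lemma~\ref{lem:Moser}) to get a \emph{pointwise} sup bound on the harmonic piece $v_i^{(1)}$ on the bad set; this sup bound is what absorbs the $L^2$-error in $\ddb\phi_i$. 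Your plan has no analogue of this step. Two smaller issues: you assert that $\mathcal{R}(C)$ carries a smooth K\"ahler cone metric, but with only a lower Ricci bound the paper can define the inverse metric $g^{j\bar k}$ only as an $L^p$ limit with bounded measurable components (Lemma~\ref{lem:inverseg}), which affects even the definition of $|\alpha|$ and $*\bar\partial\eta$; and you take for granted that the torus acts holomorphically with generator $Jr\partial_r$, whereas in the paper this requires a separate flow-comparison argument on the approximating manifolds.
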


Theorem~\ref{thm:main} was shown previously by Donaldson-Sun~\cite{DS2} under the
assumptions that the $\omega_i$ are curvature forms of line bundles
$L_i\to M_i$, and $|\mathrm{Ric}(\omega_i)| < 1$. An important
application of their result is that in their setting the holomorphic
spectrum of the tangent cones is rigid, which in turn they used to show the
uniqueness of tangent cones. While we are not able to show uniqueness,
our result does imply the rigidity of the holomorphic spectrum under
two sided Ricci curvature bounds, even when the $(M_i,\omega_i)$ are
not polarized.

More precisely, recall that for a K\"ahler cone $C(Y)$,
possibly with singularities, the holomorphic spectrum is defined by
\[ \mathcal{S} = \{\mathrm{deg}(f)\,:\, f \text{ is
    homogeneous and holomorphic on } C(Y)\} \subset
  \mathbf{R}. \]
We then have
\begin{cor}\label{cor:1}
   Suppose that we have two sided bounds $|\mathrm{Ric}(\omega_i)|<1$
   along the sequence above. Then for any $q\in Z$ the holomorphic
   spectrum of every tangent cone at $q$ is the same. In addition
   the volume ratio $V_{2n}^{-1}\mathrm{Vol}(B(o,1))$
  is an algebraic number for every tangent cone $(Z_q,o)$. 
\end{cor}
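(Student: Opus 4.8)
\emph{Overall plan.} I would deduce both statements from Theorem~\ref{thm:main} together with the two-sided bound $|\mathrm{Ric}(\omega_i)|<1$, which forces every tangent cone at $q$ to be a Ricci-flat K\"ahler (Calabi--Yau) cone on its regular part. So fix $q\in Z$ and a tangent cone $(Z_q,o)$. By Theorem~\ref{thm:main}, under a suitable embedding $Z_q\cong X\subset\mathbf{C}^N$ with $X=\mathrm{Spec}\,R$ a normal affine variety, the homothety extends to a linear action of a torus $T\subset GL_N(\mathbf{C})$; write $R=\bigoplus_{\alpha\in\Gamma}R_\alpha$ over the weight lattice $\Gamma$ of $T$, and let $\xi\in\mathfrak{t}$ be the generator whose real flow induces the homothety (the Reeb field). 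Since forming the tangent cone rescales the metrics so that the Ricci bound tends to zero, Cheeger--Colding's theory together with Anderson's $\epsilon$-regularity~\cite{And} shows that the regular set of $Z_q$ is a smooth Ricci-flat K\"ahler cone compatible with this algebraic structure; in particular $\langle\alpha,\xi\rangle>0$ for every weight $\alpha$ occurring in $R$, and the holomorphic spectrum is $\mathcal{S}(Z_q)=\{\langle\alpha,\xi\rangle:\alpha\in\Gamma,\ R_\alpha\neq 0\}$.

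\emph{Algebraicity of the volume ratio.} Here I would invoke the volume minimization of Martelli--Sparks--Yau, in the possibly singular affine setting: it identifies $V_{2n}^{-1}\mathrm{Vol}(B(o,1))$ with the normalized volume $\mathbf{V}(\xi)$ of the Sasaki structure, where on the rational polyhedral Reeb cone $\mathfrak{t}^{+}=\{\eta\in\mathfrak{t}:\langle\alpha,\eta\rangle>0\text{ for all weights }\alpha\text{ of }R\}$ the function $\eta\mapsto\mathbf{V}(\eta)$ is, via equivariant localization on an equivariant resolution of $X$, a rational function of $\eta$ with rational coefficients. Ricci-flatness makes $\xi$ an interior critical point of $\mathbf{V}$, and the strict convexity of $\mathbf{V}$ (also due to Martelli--Sparks--Yau) makes its real Hessian at $\xi$ positive definite; hence $\xi$ is a \emph{nondegenerate} zero of the $\mathbf{Q}$-polynomial system $\nabla\mathbf{V}=0$, so it is an isolated point of the complex zero locus of that system and therefore has algebraic coordinates. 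Evaluating $\mathbf{V}$ at $\xi$ then gives $\mathbf{V}(\xi)\in\overline{\mathbf{Q}}$, proving the second assertion.

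\emph{Rigidity of the spectrum.} For the first assertion the plan is to show that the function $d\mapsto\dim\mathcal{O}_{\le d}(Z_q)$, where $\mathcal{O}_{\le d}(Z_q)=\bigoplus_{\langle\alpha,\xi\rangle\le d}R_\alpha$ is finite-dimensional and its jumps recover $\mathcal{S}(Z_q)$, is an intrinsic, \emph{scale-invariant} invariant of $(Z,q)$, and hence is independent of the chosen tangent cone. Concretely, I would argue that holomorphic functions of growth order $\le d$ on $Z_q$ are precisely the Gromov--Hausdorff limits of holomorphic functions of growth order $\le d$ on the unit balls $B(p_i,1)\subset M_i$ (after the rescalings that realize $Z_q$), with no dimension lost in the limit once one allows an arbitrarily small slack in the growth order. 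Since ``growth order $\le d$'' is unchanged by rescaling $\omega_i$, a squeeze argument then yields, for all but countably many $d$, that $\dim\mathcal{O}_{\le d}(Z_q)=\lim_{i\to\infty}\dim\{f\in\mathcal{O}(B(p_i,1)):\text{growth order of }f\le d\}$, the limit being over the full sequence and manifestly independent of which tangent cone was taken; the spectrum is then the same for all of them.

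\emph{The main obstacle.} The substantial point is the convergence claim above: that polynomial-growth holomorphic functions on the $M_i$ pass to the Gromov--Hausdorff limit without loss of dimension and with their growth orders essentially preserved. In the polarized case this is part of the Donaldson--Sun machinery~\cite{DS1,DS2}; here I would have to carry it out without polarizations, using the gradient and H\"ormander $\overline{\partial}$-estimates available under two-sided Ricci bounds, the partial regularity of $Z$ from~\cite{LiuSz}, and a three-circles / frequency monotonicity to control growth orders uniformly in $i$. I expect this to be the only real difficulty; it is, however, strictly weaker than uniqueness of the tangent cone, since it yields only the equality of the multigraded Hilbert functions of the various tangent cones at $q$, not an isomorphism between them.
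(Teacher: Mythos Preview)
Your treatment of the algebraicity of the volume ratio is essentially the paper's: Theorem~\ref{thm:main} gives the affine variety with linear torus action, two-sided Ricci bounds make the regular part a Ricci-flat K\"ahler cone, and then Martelli--Sparks--Yau volume minimization pins down the Reeb vector $\xi$ as the unique critical point of a rational function with rational coefficients, forcing both $\xi$ and $\mathbf{V}(\xi)$ to be algebraic. This part is fine.

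For the rigidity of the holomorphic spectrum, however, you take a different and considerably harder route than the paper, and the obstacle you yourself flag is real. The paper (following \cite[Proposition~2.21, Lemma~4.2]{DS2}) argues instead as follows: first, by the algebraicity argument you already gave, the holomorphic spectrum of \emph{any} tangent cone at $q$ consists of algebraic numbers (pairings $\langle\alpha,\xi\rangle$ with $\alpha$ rational and $\xi$ algebraic); second, the space of tangent cones at $q$ is connected (a standard consequence of Cheeger--Colding theory and Bishop--Gromov monotonicity); third, the spectrum varies continuously along this connected family. A continuous function from a connected space into a countable set is constant, so the spectrum is the same for every tangent cone. This completely avoids any comparison with holomorphic functions on the approximating manifolds $M_i$.

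Your proposed route --- realizing $\dim\mathcal{O}_{\le d}(Z_q)$ as a limit of dimensions of spaces of holomorphic functions on the $M_i$ --- runs directly against the grain of the paper's methods. A central point of the paper is that, in the absence of polarizations, the holomorphic functions on the tangent cone are \emph{not} constructed as limits of holomorphic functions on the $M_i$; they are produced intrinsically via the H\"ormander estimate on the cone (Propositions~\ref{prop:L21} and \ref{prop:L2existence}). There is no mechanism here to guarantee that the $M_i$ carry enough holomorphic functions of the right growth to account for all of $\mathcal{O}_{\le d}(Z_q)$, and the phrase ``growth order $\le d$ on $B(p_i,1)$'' would in any case need a precise meaning (e.g.\ via frequency or three-circles) together with a uniform-in-$i$ monotonicity, none of which is supplied. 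So the step you label as the main obstacle is indeed a genuine gap; the paper's connectedness-plus-algebraicity argument sidesteps it entirely.
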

As in \cite{DS2}, the rigidity of the holomorphic spectrum follows
from the fact that the space of tangent cones at each point is
connected, and the holomorphic spectrum consists of algebraic
numbers. Note that these results hold in particular for tangent cones at
infinity of Calabi-Yau manifolds with Euclidean volume growth. 
 
The method of proof follows the overall strategy of
Donaldson-Sun~\cite{DS1,DS2} for constructing holomorphic functions on
limit spaces. A crucial difference is that in our case the
holomorphic functions on a tangent cone $Z_q$ are not obtained as
limits of holomorphic functions on smooth manifolds. Instead we prove a
version of H\"ormander's $L^2$-estimate on the tangent
cone, see Proposition~\ref{prop:L2existence}. 
In the setting of two-sided Ricci curvature bounds there are
substantial simplifications using that the singular set has
codimension four, and the $L^2$-estimate can be proven directly on the
tangent cone. We give this argument separately in
Section~\ref{sec:Ricflat}, since it may be of independent
interest. The case of lower Ricci bounds is treated in
Section~\ref{sec:L2est} by proving approximate versions of the
basic estimate on smooth spaces converging to the tangent cone. Using
this, in Section~\ref{sec:affine} we follow the strategy of
Donaldson-Sun~\cite{DS1,DS2} to prove Theorem~\ref{thm:main}. 

\subsection*{Acknowledgements} We thank Aaron Naber for useful
comments on the paper, and Mihai P\u{a}un for pointing out an error. 

\section{Two-sided Ricci bounds}\label{sec:Ricflat}

In this section, we show that when our sequence $(M_i,\omega_i)$ has
two-sided Ricci curvature bounds $|\mathrm{Ric}(\omega_i)|<1$, then we
can directly prove the H\"ormander $L^2$ estimate on the tangent
cone. While the result is subsumed by the more general setting treated
in Section~\ref{sec:L2est}, the proof may be of independent interest. 

Let $X$ be a tangent cone of the non-collapsed Gromov-Hausdorff limit of a 
sequence of complete K\"ahler manifolds with two-sided Ricci
curvature bounds. Let $S\subset X$ denote the singular set (see
Cheeger-Colding~\cite{CC2} for details on the structure of $X$). The
regular set $\mathcal{R} = X \setminus S$ admits a Ricci flat K\"ahler metric
$\omega = \frac{1}{2} \ddb r^2$, where $r$ is the distance from the
vertex of the cone $X$. Suppose that we have a function
$\phi$ on $\mathcal{R}$ satisfying $c \omega \leq \ddb\phi \leq C\omega$ on $\mathcal{R}$
for a non-negative function $c$, and constant $C > 0$. More generally
$C$ could be a locally bounded function. For instance we could use
$\phi = r^2$ or $\phi = \log(1 + r^2)$. 

\begin{prop}\label{prop:L21}
  Suppose that $\alpha$ is a smooth $(0,1)$-form compactly supported in
  $\mathcal{R}$, such that $\dbar\alpha = 0$. Then there exists a
  function $f$ on $\mathcal{R}$ satisfying $\dbar f = \alpha$, and 
  \[ \int_{\mathcal{R}} |f|^2 e^{-\phi} \, \omega^n \leq \int_{\mathcal{R}}
    c^{-1} |\alpha|^2 e^{-\phi}\,\omega^n. \]
\end{prop}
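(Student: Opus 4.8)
The plan is to prove the $L^2$-estimate by approximating the singular cone $X$ from inside by a suitable exhaustion and running the standard Bochner--Kodaira--H\"ormander argument on the smooth open manifold $\mathcal{R}$, taking care that the singular set does not contribute boundary terms. First I would fix the compact support $K=\mathrm{supp}\,\alpha\subset\mathcal{R}$ and choose a family of cutoff functions $\chi_\delta$ on $\mathcal{R}$ that are $1$ on $K$, supported away from $S$, and satisfy the Laplacian/gradient cutoff bounds coming from the fact that $S$ has (Minkowski) codimension at least $4$ in the two-sided bounds setting --- this is the standard trick of cutting off a neighborhood of the singular set while keeping $\int |\nabla\chi_\delta|^2\to 0$ and controlling $\int |\Delta\chi_\delta|$. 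On the region $\{\chi_\delta=1\}$, which is a smooth incomplete K\"ahler manifold, the Bochner formula together with $\mathrm{Ric}(\omega)=0$ and $\ddb\phi\ge c\,\omega$ gives the pointwise inequality, after pairing with the $\bar\partial$-closed form, of the form $\|\bar\partial^*\beta\|^2 + \|\bar\partial\beta\|^2 \ge \int c\,|\beta|^2 e^{-\phi}\,\omega^n$ for $\beta$ in the domain of the relevant operators; the weight $e^{-\phi}$ contributes exactly the curvature term $\ddb\phi$ which dominates $c\,\omega$.

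The key analytic point is to set up the right Hilbert space: I would work with the weighted spaces $L^2(\mathcal{R}, e^{-\phi}\omega^n)$ of $(0,q)$-forms for $q=0,1$, and let $\bar\partial$ denote the maximal (distributional) closed extension. The crucial claim --- and I expect this to be the main obstacle --- is that on $\mathcal{R}$, with the incomplete Ricci-flat cone metric, the smooth compactly supported forms are dense in the graph norm of $\bar\partial^*$ (equivalently, that there is no $L^2$-cohomology contribution from the singular set), so that the basic a priori estimate extends from $C^\infty_c$ forms to the whole domain. This is where the codimension-$4$ condition enters decisively: a neighborhood of $S$ in $X$ has finite volume and, more importantly, capacity-zero type behavior allowing the Andreotti--Vesentini / H\"ormander density lemma to go through; one shows that for $\beta\in \mathrm{Dom}(\bar\partial)\cap\mathrm{Dom}(\bar\partial^*)$ the cutoffs $\chi_\delta\beta$ converge to $\beta$ in both graph norms, using $\int_{\mathcal{R}}|\nabla\chi_\delta|^2\,\omega^n\to 0$. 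Granting this density, one gets the estimate $\|\alpha\|^2_{e^{-\phi}} \le \int c^{-1}|\alpha|^2 e^{-\phi}\,\omega^n$ for the $\bar\partial$-closed $\alpha$ restricted to $(\ker\bar\partial)$, and then the usual Riesz-representation / Hahn--Banach argument on the functional $\beta\mapsto \langle \beta,\alpha\rangle$ defined on $\bar\partial^*(\mathrm{Dom}\,\bar\partial^*)$ produces $f\in L^2(e^{-\phi}\omega^n)$ with $\bar\partial f=\alpha$ and $\int_{\mathcal{R}}|f|^2 e^{-\phi}\,\omega^n \le \int_{\mathcal{R}} c^{-1}|\alpha|^2 e^{-\phi}\,\omega^n$.

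It remains to note that $f$, a priori only an $L^2_{\mathrm{loc}}$ weak solution of $\bar\partial f=\alpha$ with $\alpha$ smooth, is automatically smooth on $\mathcal{R}$ by elliptic regularity (interior regularity for $\bar\partial$ away from $S$), so the statement as phrased --- $f$ a function on $\mathcal{R}$ --- holds. The one subtlety I would be careful about is that $c$ is only assumed non-negative, so $c^{-1}$ may be infinite somewhere; but then the right-hand side is either $+\infty$ (and there is nothing to prove) or finite, in which case $\alpha$ is supported where $c>0$ and the argument above applies verbatim with $c$ bounded below by a positive constant on $\mathrm{supp}\,\alpha$ after a further exhaustion. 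In summary: (1) reduce to the a priori estimate on $C^\infty_c$ via a Bochner computation with the weight $e^{-\phi}$; (2) prove the cutoff density lemma near $S$ using $\mathrm{codim}\,S\ge 4$; (3) conclude by functional analysis and interior elliptic regularity. Step (2) is the heart of the matter and the place where the two-sided Ricci bound is genuinely used.
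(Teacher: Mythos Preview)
Your overall architecture is right, and you correctly flag step (2) as the heart of the matter, but the justification you give for it does not work. The claim is that for $\beta\in\mathrm{Dom}(\bar\partial)\cap\mathrm{Dom}(\bar\partial^*)$ one has $\chi_\delta\beta\to\beta$ in both graph norms ``using $\int_{\mathcal{R}}|\nabla\chi_\delta|^2\,\omega^n\to 0$''. But the commutator terms are
\[
\bar\partial(\chi_\delta\beta)-\chi_\delta\bar\partial\beta=\bar\partial\chi_\delta\wedge\beta,\qquad
\bar\partial^*_\phi(\chi_\delta\beta)-\chi_\delta\bar\partial^*_\phi\beta=-\iota_{\nabla\chi_\delta}\beta,
\]
and their weighted $L^2$-norms are controlled by $\int|\nabla\chi_\delta|^2\,|\beta|^2\,e^{-\phi}\omega^n$. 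With a codimension-four singular set one can arrange $|\nabla\chi_\delta|\lesssim\delta^{-1}$ on a set of volume $\lesssim\delta^4$, so $\int|\nabla\chi_\delta|^2\to 0$; however $\int|\nabla\chi_\delta|^2|\beta|^2$ is bounded only by $\delta^{-2}\int_{\{\delta\le d(\cdot,S)\le 2\delta\}}|\beta|^2$, and for an arbitrary $L^2$ form $\beta$ there is no rate on this tail. So the density lemma as stated does not follow from the gradient cutoff bound alone; one needs $\beta$ to be locally bounded (or at least in $L^p$ for some $p$ large enough to beat the blow-up of $\nabla\chi_\delta$), and there is no reason a general element of $\mathrm{Dom}(\bar\partial)\cap\mathrm{Dom}(\bar\partial^*)$ has that regularity near $S$.

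The paper does not attempt a general density statement. Instead it starts from a compactly supported test form $v$, decomposes $v=v_1+v_2$ with $v_1\in\ker\bar\partial$, and observes that near $S$ one has $\bar\partial v_1=0$ and $\bar\partial^*_\phi v_1=0$ (since $v$ vanishes there and $\bar\partial^*_\phi v_2=0$), so $v_1$ is a \emph{harmonic} $(0,1)$-form near $S$. A Bochner computation then gives $\Delta|v_1|\ge -C|v_1|$ near $S$, and a mean-value inequality across the singular set (the paper's Lemma~2) upgrades $v_1\in L^2$ to $|v_1|\in L^\infty_{\mathrm{loc}}$. That lemma is where codimension four is genuinely used: its proof needs cutoffs with $\Delta\chi_\epsilon\in L^2$, not merely $\nabla\chi_\epsilon\in L^2$, and this fails in codimension two. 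Once $|v_1|$ is locally bounded, the cutoff error $\int|\nabla\chi_\delta|^2|v_1|^2$ really does tend to zero, and the Bochner--Kodaira inequality applied to $\chi_\delta\eta_R v_1$ yields the basic estimate for $v_1$; since $\alpha\perp v_2$ and $\bar\partial^*_\phi v_1=\bar\partial^*_\phi v$, this gives the inequality you need. In short, your plan is salvageable, but the missing ingredient is exactly the local $L^\infty$ bound on the $\ker\bar\partial$-component coming from its harmonicity near $S$ together with a mean-value estimate valid across a codimension-four set.
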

\begin{proof}
  We follow the argument from Demailly~\cite[Theorem 5.1]{Dem} to
  prove H\"ormander's $L^2$-estimate~\cite{Ho}, the
  difference being that in our case we do not know that $\mathcal{R}$
  admits a complete K\"ahler metric, and so we need a more careful
  argument for approximating $L^2$ forms with smooth forms of compact
  support. 

  As in Demailly's proof, our goal is to prove the inequality
  \[ \label{eq:ineq1} \left| \int_{\mathcal{R}} \langle \alpha, v \rangle
      e^{-\phi}\,\omega^n \right|^2 \leq \left(\int_{\mathcal{R}} c^{-1}
      |\alpha|^2 e^{-\phi}\,\omega^n\right) \Vert \dbar^*_\phi
    v\Vert^2 \]
  for all smooth $(0,1)$-forms $v$ with compact support in
  $\mathcal{R}$. To define $\dbar^*_\phi v$ we are viewing $v$ as a
  $(0,1)$-form valued in the trivial bundle with metric $e^{-\phi}$. 
The existence of the required function $f$ then follows
  from the Riesz representation theorem. 

  Given such a smooth $(0,1)$-form $v$ compactly supported away from $S$, 
  we can decompose $v=v_1 + v_2$ under the $L^2$-orthogonal
  decomposition
  \[L^2 = \mathrm{ker}\,\dbar\oplus (\mathrm{ker}\,\dbar)^{\perp}.\]
  Note that $\dbar^*_\phi v_2=0$, and also $\dbar^*_\phi v=0$ near $S$, and
  therefore $\dbar^*_\phi v_1=0$ near $S$. Since also by definition $\dbar
  v_1=0$, it follows that $v_1$ is a harmonic $(0,1)$-form near
  $S$. If $u$ is any harmonic $(0,1)$-form valued in a line bundle
  $L$ with curvature $F_{j\bar k}$, in an orthonormal frame we have
\[ \begin{aligned} 0 &= \Delta_{\bar\partial} u =
  (\bar\partial^*\bar\partial + \bar\partial\bar\partial^*)u \\
&= -\nabla_j\nabla_{\bar j} u_{\bar k} +
\nabla_j \nabla_{\bar k} u_{\bar j} - \nabla_{\bar k}\nabla_j u_{\bar
  j} \\
&= -\nabla_j\nabla_{\bar j} u_{\bar k} + R^{\bar p}_{\,\, \bar j
  j\bar k} u_{\bar p} + F_{j\bar k} u_{\bar j},
\end{aligned}  \]
and so
\[ \nabla_j\nabla_{\bar j} u_{\bar k} = R^{\bar p}_{\,\, \bar j
  j\bar k} u_{\bar p} + F_{j\bar k} u_{\bar j}. \]
We also have
\[ \begin{aligned} \nabla_{\bar j}\nabla_j u_{\bar k} &= \nabla_j\nabla_{\bar j}u_{\bar
  k} - R^{\bar p}_{\,\, \bar k j\bar j} u_{\bar p} - F_{j\bar j}
u_{\bar k} \\
&= F_{j\bar k} u_{\bar j} - F_{j\bar j}u_{\bar k}.
\end{aligned} \]
It follows that
\[ \label{eq:Deltanorm}
\begin{aligned} \Delta |u|^2 = \nabla_j\nabla_{\bar j} |u|^2 &= (\nabla_j\nabla_{\bar j} u_{\bar
  k}) \overline{u_{\bar k}} + |\nabla^{1,0} u|^2 + |\nabla^{0,1} u|^2 +
u_{\bar k} \overline{\nabla_{\bar j}\nabla_j u_{\bar k}} \\
&= |\nabla u|^2 + R_{p\bar k}u_{\bar
  p}\overline{u_{\bar k}} + F_{j\bar k}u_{\bar j}\overline{u_{\bar k}}
+ \overline{F_{j\bar k}} u_{\bar k}\overline{u_{\bar j}} -
\overline{F_{j\bar j}} u_{\bar k}\overline{u_{\bar k}} \\
&= |\nabla u|^2  + R_{p\bar k} u_{\bar
  p}\overline{u_{\bar k}} + 2 F_{j\bar k} u_{\bar
  j}\overline{u_{\bar k}} - F_{j\bar j} |u|^2. 
\end{aligned} \]
In our setting the Ricci curvature vanishes, and $F_{j\bar k}
= \partial_j\partial_{\bar k} \phi$. 
In particular in a neighborhood of $S$ we have 
  \[ \Delta |v_1|^2 \geq |\nabla v_1|^2 - Cn |v_1|^2. \] 
  It follows that $\Delta |v_1| \geq -C|v_1|$ near $S$, for a locally
  bounded function $C$. Since $|v_1|\in L^2(\mathcal{R})$, we can apply
  Lemma~\ref{lem:subharmonic} below, to obtain that $|v_1|\in
  L^\infty_{loc}$ on a neighborhood of $S$ in $X$ (i.e. $|v_1|$ can
  not blow up as we approach the singular set). 

  Let us now define cutoff functions $\eta_R$ and $\chi_\epsilon$ as
  follows. The function $\eta_R$ equals 1 in $B_R(0)$, vanishes
  outside of $B_{R+1}(0)$, and satisfies $|\nabla\eta_R| < 2$. The function
   $\chi_\epsilon$ equals 1 outside of
  the $2\epsilon$-neighborhood of $S$, and vanishes in the
  $\epsilon$-neighborhood of $S$. In addition since $S$ has
  codimension 4 (see \cite{CCT}), we can arrange that on compact sets $K$ we have
  $\Vert \nabla\chi_\epsilon\Vert_{L^2(K)} \to 0$ as $\epsilon\to 0$. 
  Since $\chi_\epsilon\eta_R v_1$ has
  compact support in $\mathcal{R}$, by the Bochner-Kodaira inequality, 
 \[ \int_{\mathcal{R}} c |\chi_\epsilon \eta_R v_1|^2
    e^{-\phi}\,\omega^n \leq \Vert \dbar (\chi_\epsilon \eta_R
    v_1)\Vert^2 + \Vert \dbar^*_\phi (\chi_\epsilon\eta_R v_1)\Vert^2, \]
  and it follows using the Cauchy-Schwarz inequality that
  \[ \label{eq:CS} \left| \int_{\mathcal{R}} \langle \alpha, \chi_\epsilon \eta_R v_1 \rangle
      e^{-\phi}\,\omega^n \right|^2 &\leq \int_{\mathcal{R}}
    c^{-1}|\alpha|^2e^{-\phi}\,\omega^n \int_{\mathcal{R}} c |\chi_\epsilon \eta_R v_1|^2
    e^{-\phi}\,\omega^n \\
    &\leq \left(\int_{\mathcal{R}}
    c^{-1}|\alpha|^2e^{-\phi}\,\omega^n\right) (\Vert \dbar (\chi_\epsilon \eta_R
    v_1)\Vert^2 + \Vert \dbar^*_\phi (\chi_\epsilon\eta_R v_1)\Vert^2). \]

  Now recall that $v_1$ is locally bounded near $S$, and $\Vert
  \nabla\chi_\epsilon\Vert_{L^2}$ can be made arbitrarily small. It
  follows that for fixed $R$
  if we let $\epsilon \to 0$, we have $\chi_\epsilon\eta_R v_1\to
  \eta_R v_1$,  $\dbar (\chi_\epsilon\eta_R v_1) \to
  \dbar (\eta_R v_1)$ and $\dbar^*_\phi (\chi_\epsilon\eta_R v_1) \to
  \dbar^*_\phi (\eta_R v_1)$ in $L^2$. 
  In addition, using that $|\nabla \eta_R| < 2$ and that the
  supports of $\eta_R$ exhaust $\mathcal{R}$ as $R\to\infty$, we have that
  $\eta_R v_1\to v_1$,  $\dbar (\eta_R v_1) \to
  \dbar v_1$ and $\dbar^* (\eta_R v_1) \to
  \dbar^* v_1$ in $L^2$ as $R\to \infty$. It follows from
  \eqref{eq:CS} that
  \[ \left| \int_{\mathcal{R}} \langle \alpha, v_1 \rangle
      e^{-\phi}\,\omega^n \right|^2 
    &\leq \left(\int_{\mathcal{R}}
    c^{-1}|\alpha|^2e^{-\phi}\,\omega^n\right) (\Vert \dbar
    v_1\Vert^2 + \Vert \dbar^*_\phi  v_1\Vert^2). \]
  The required inequality \eqref{eq:ineq1} now follows since the
  assumption $\dbar\alpha =0$ implies that $\alpha$ is orthogonal to
  $v_2$, and at the same time $\dbar v_1=0$ and $\dbar^*_\phi
  v_1=\dbar^*_\phi v$. 
\end{proof}

We used the following Lemma in the proof above. Note that this
estimate fails when the singular set has codimension two. 
\begin{lem}\label{lem:subharmonic}
  Let $B$ be a unit ball in $X$, and suppose that $u\in L^2(B)$
  is such that on $B\setminus S$ the function $u$ is smooth, non-negative, and
  $\Delta u \geq -Au$ for a constant $A > 0$. Then we have
  \[ \sup_{\frac{1}{2}B \setminus S} u \leq C \Vert u\Vert_{L^2(B)} \]
  for a constant $C$ depending on $A$, the dimension, and asymptotic volume
  ratio of $X$.
\end{lem}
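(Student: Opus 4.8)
The assertion is a Moser-type mean value inequality for the nonnegative
subsolution $u$ of $\Delta u + Au \ge 0$, and the only feature absent from
the classical situation is that $u$ is a priori defined only on $B\setminus
S$; the crux is to show that the singular set $S$ is removable. I will use
three facts available in our setting. First, since $X$ is a non-collapsed
tangent cone of Kähler manifolds with Ricci bounded below,
$(X,d,\mathcal{H}^{2n})$ is a doubling metric measure space supporting a
$(1,2)$-Poincar\'e inequality, with constants bounded in terms of $n$ and the
asymptotic volume ratio of $X$ (Cheeger--Colding~\cite{CC2}); equivalently
there is a Sobolev inequality $\big(\int_B |w|^{2\kappa}\big)^{1/\kappa}\le
C_S\big(\int_B|\nabla w|^2 + \int_B |w|^2\big)$ for $w\in\mathrm{Lip}_c(B)$,
with $\kappa = \tfrac{n}{n-1}$ and $C_S=C_S(n,\text{vol ratio})$. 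Second, by
Cheeger--Colding--Tian~\cite{CCT} the set $S$ has codimension at least $4$;
as in the proof of Proposition~\ref{prop:L21}, writing $S_\delta$ for the
$\delta$-neighbourhood of $S$ we have $\mathcal{H}^{2n}(S_\delta\cap B)\to 0$
and there exist cutoffs $\chi_\epsilon\in\mathrm{Lip}(X)$ with
$\chi_\epsilon\equiv 0$ on $S_\epsilon$, $\chi_\epsilon\equiv 1$ off
$S_{2\epsilon}$, and $\Vert\nabla\chi_\epsilon\Vert_{L^2(B)}\to 0$ as
$\epsilon\to 0$. Third, on $B\setminus S$ the metric is smooth and Ricci
flat, so interior elliptic theory applies on any ball contained in the
regular set. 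The plan is: first remove $S$, then run the standard iteration.

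\emph{Step 1 (removability).} I claim $u\in W^{1,2}_{loc}(B)$ and that $u$ is
a weak subsolution of $\Delta u + Au\ge 0$ on all of $B$. For the gradient
bound I test the inequality on $B\setminus S$ against
$\eta^2\chi_\epsilon^2\,(u\wedge m)$, where $\eta\in\mathrm{Lip}_c(B)$ and
$m>0$, and argue as in a Caccioppoli estimate; the terms involving
$\nabla\chi_\epsilon$ are controlled using $u\wedge m\le m$ together with
$\Vert\nabla\chi_\epsilon\Vert_{L^2}\to 0$, so that letting $\epsilon\to 0$
and then $m\to\infty$ bounds $\int_{B\setminus S}\eta^2|\nabla u|^2$ by
$C\int_B(|\nabla\eta|^2+A\eta^2)u^2$, which is finite. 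Given
$\nabla u\in L^2_{loc}(B)$, one removes $S$ from the weak formulation by
testing against $\chi_\epsilon\varphi$ for $0\le\varphi\in\mathrm{Lip}_c(B)$
and letting $\epsilon\to 0$, the cross term $\int\varphi\,\nabla\chi_\epsilon
\cdot\nabla u$ tending to $0$ since $\Vert\nabla\chi_\epsilon\Vert_{L^2}\to 0$
and $\nabla u\in L^2_{loc}$. This is precisely where codimension $>2$ is
used: for a codimension-two singular set $\nabla\chi_\epsilon$ does not go to
$0$ in $L^2$, $S$ has positive capacity, and the conclusion genuinely fails.

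\emph{Step 2 (Moser iteration).} Now $u\ge 0$ is a weak subsolution of
$\Delta u + Au\ge 0$ on the ball $B$ in the doubling, Poincar\'e metric
measure space $(X,d,\mathcal{H}^{2n})$, with $u\in L^2(B)$. The standard
De Giorgi--Nash--Moser iteration, with test functions $\eta^2 u^{2\beta-1}$
(truncated as usual), $\beta=\kappa^j$, and the Sobolev inequality above at
each stage, yields $\sup_{\frac12 B} u\le C\big(\mathcal{H}^{2n}(B)^{-1}
\int_B u^2\big)^{1/2}$ with $C$ depending only on $n$, $A$, $C_S$ and the
doubling constant, hence only on $n$, $A$, and the asymptotic volume ratio.
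Since $u$ agrees on $B\setminus S$ with its continuous representative, this
gives the claimed bound on $\sup_{\frac12 B\setminus S}u$. The one delicate
point is Step 1 — showing that the a priori merely $L^2$ function $u$ has
$L^2_{loc}$ gradient up to $S$, so that $S$ may be treated as removable;
everything afterward is the classical iteration, and this is exactly the
place where the codimension-four hypothesis is indispensable.
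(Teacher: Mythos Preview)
Your overall plan—first show that $S$ is removable so that $u$ is a weak subsolution of $(\Delta+A)u\geq 0$ on all of $B$, then run Moser iteration using the doubling/Poincar\'e structure of $X$—is sound, and it is a genuinely different route from the paper's direct heat–kernel argument. However, Step~1 as you have written it has a real gap.

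When you test against $\eta^{2}\chi_\epsilon^{2}(u\wedge m)$ and integrate by parts, the ``good'' term you produce is
\[
\int \eta^{2}\chi_\epsilon^{2}\,\nabla(u\wedge m)\cdot\nabla u
=\int_{\{u<m\}}\eta^{2}\chi_\epsilon^{2}|\nabla u|^{2},
\]
while the cross terms coming from $\nabla(\eta^{2}\chi_\epsilon^{2})$ contain the factor $(u\wedge m)\,\nabla u$, \emph{not} $(u\wedge m)\,\nabla(u\wedge m)$. On the set $\{u\geq m\}$ this is $m\,\nabla u$, and $\nabla u$ there is not dominated by the left–hand side and cannot be bounded using only $u\wedge m\leq m$ and $\Vert\nabla\chi_\epsilon\Vert_{L^{2}}\to 0$. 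In particular the term
\[
m\int_{\{u\geq m\}}\nabla(\eta^{2}\chi_\epsilon^{2})\cdot\nabla u
\]
has no a priori upper bound from your hypotheses; Young's inequality only lets you absorb the piece on $\{u<m\}$. So the Caccioppoli estimate, and hence the claim $\nabla u\in L^{2}_{loc}(B)$, does not follow from the argument as stated. (This is the precise place where codimension~$4$ is more than ``capacity zero'': the naive cutoff with $\Vert\nabla\chi_\epsilon\Vert_{L^{2}}\to 0$ is not enough when $u$ is merely in $L^{2}$.)

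The fix is to use the sharper cutoffs available in this setting, namely $\chi_\epsilon$ with both $\Vert\nabla\chi_\epsilon\Vert_{L^{2}}$ and $\Vert\Delta\chi_\epsilon\Vert_{L^{2}}$ small (see \cite[Remark~1.15]{JN}); this is exactly what the paper invokes. With such cutoffs the removability is in fact \emph{easier} than your Caccioppoli route: for $0\leq\varphi\in C_c^\infty(B)$, write
\[
\int_B u\,\Delta\varphi
=\lim_{\epsilon\to 0}\Big(\int_B u\,\Delta(\chi_\epsilon\varphi)
-\int_B u\,\big(\varphi\,\Delta\chi_\epsilon+2\nabla\varphi\cdot\nabla\chi_\epsilon\big)\Big).
\]
The first integral is $\geq -A\int u\,\chi_\epsilon\varphi$ since $\chi_\epsilon\varphi$ is compactly supported in $B\setminus S$, and the second tends to $0$ because $u\in L^{2}$ while $\Delta\chi_\epsilon,\nabla\chi_\epsilon\to 0$ in $L^{2}$. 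This gives the weak inequality on $B$ directly, with no prior $W^{1,2}$ bound needed. The paper then obtains the $L^\infty$ estimate via the heat kernel; your Moser iteration in Step~2 would work equally well from this point.
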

\begin{proof}
  The function $\tilde{u}(s,x)=e^{\sqrt{A}s}u(x)$ on the product
  $\mathbf{R}\times B$ satisfies $\Delta_{\mathbf{R}\times X}\tilde u
  = A e^{\sqrt{A}s} u + e^{\sqrt{A}s} \Delta_X u \geq 0$, and the $L^2$-norm of $\tilde{u}$
  on $[-1,1]\times B$ can be bounded in terms of the $L^2$-norm of $u$
  on $B$, and the constant $A$. Using this we can reduce to the case
  $A=0$. 

  Given $y\in \frac{1}{2}B\setminus S$, let $ H_t(x,y)$ be the heat
  kernel on $X$ satisfying $\partial_t  H_t = \Delta_x  H_t$ (see
  Ding~\cite{Ding} for details on the heat kernel on tangent cones). In
  addition let $\eta$ be a cutoff function such that $\eta=1$ on
  $\frac{2}{3}B$ and $\eta=0$ outside of $\frac{3}{4}B$. We have
 \[ \label{eq:ddtineq}\partial_t \int_B u(x)\eta(x)  H_t(x,y)\,dx &= \int_B u \eta
   \Delta H_t \\
   &= \int_B u \Delta(\eta H_t) - \int_B \Big[ u H_t \Delta
   \eta + 2u \nabla\eta \cdot \nabla H_t\Big] \\
   &\geq \int_B u \Delta(\eta H_t) - C\Vert u\Vert_{L^2}. \]
  Here we used that $\Delta\eta \in L^2$, and $\nabla H_t$ is
  bounded independently of $t$ on the support of $\nabla\eta$. 

  We claim that the first term is non-negative for any $t > 0$. For this let
  $\chi_\epsilon$ be cutoff functions such that $1-\chi_\epsilon$ is
  supported in the $\epsilon$-neighborhood of $S$, and in addition
  $\nabla\chi_\epsilon, \Delta\chi_\epsilon \in L^2$. Such a choice is
  possible because of Remark $1.15$ on page $6$ of \cite{JN}. For a fixed $t >
  0$ let us write $\psi = \eta H_t$. Then 
  \[  \int_B \chi_\epsilon u \Delta\psi = \int_B u
    \Delta(\chi_\epsilon \psi) - \int_B \Big[ u\psi
    \Delta\chi_\epsilon + 2u\nabla \psi\cdot \nabla\chi_\epsilon]. \]
  Since now $\chi_\epsilon\psi \geq 0$ is compactly supported in
  $B\setminus S$, the first term is non-negative, while at the same
  time  the last two terms tend to zero as $\epsilon \to 0$. It
  follows that
  \[ \int_B u \Delta(\eta H_t) \geq 0, \]
  and so \eqref{eq:ddtineq} implies
  \[ \int_B u \eta H_1 &\geq \lim_{t\to 0} \int_B u \eta H_t -
    C\Vert u\Vert_{L^2} \\
    &= u(y) - C\Vert u\Vert_{L^2}. \]
  Since $H_1$ is bounded above, we obtain the required estimate for
  $u(y)$. 
\end{proof}

\section{The $L^2$ estimate on tangent cones}\label{sec:L2est}
Suppose now that $(X,o)$ is a tangent cone at a point of a non-collapsed limit
space of $n$-dimensional 
K\"ahler manifolds with lower bounds on the Ricci curvature. In
particular, for all $R > 0$, the ball $B(o, R)\subset X$ is the
Gromov-Hausdorff limit of a sequence of balls $B(p_i, R)$ in K\"ahler
manifolds with $\mathrm{Ric} > -i^{-1}$, and $\mathrm{vol}(B(p_i, R))
> \nu R^{2n}$ for the non-collapsing constant $\nu > 0$, with $p_i\to
o$.  For sufficiently small $\epsilon$ we know from \cite{LiuSz} that
the $\epsilon$-regular set 
$\mathcal{R}_\epsilon$ is a complex manifold, and from
Cheeger-Jiang-Naber~\cite{CJN} that $X\setminus
\mathcal{R}_\epsilon$ is $(2n-2)$-rectifiable with locally finite
$(2n-2)$-dimensional Minkowski content. From now on we choose an
$\epsilon=\epsilon(n)$ sufficiently small, but fixed.

For $t\geq 0$, let $w(t)$ satisfy
\begin{equation}\label{w}
w' > 0, w''\leq 0, \text{ and } w'+tw''>0.
\end{equation}
For instance we can take $w(t)= C\log (t + 1)$ for any constant $C>0$.
In the $L^2$-estimate 
we will use a weight function of the form $\phi = w (r^2)$, where $r$
is the distance function from the vertex of the cone $X$. 
We denote by $\mu$ the natural measure on
$X$. It will also be useful to denote by $\mu_0$ a smooth volume measure
on $\mathcal{R}_\epsilon$ with respect to the holomorphic charts (up to bounded factors). It
follows that on compact sets we have a lower bound $d\mu > C^{-1}d\mu_0$.
\begin{prop}\label{prop:L2existence}
  Suppose that $\alpha = \bar\partial h$, where $h$
  is a smooth function compactly supported in
  $\mathcal{R}_\epsilon$. Then there exists a
  function $f$ on $\mathcal{R}_\epsilon$ satisfying $\dbar f = \alpha$, and 
  \[ \int_{\mathcal{R}_\epsilon} |f|^2 e^{-\phi} \, d\mu\leq
     \int_{\mathcal{R}_\epsilon} 
    \frac{|\alpha|^2}{w'(r^2)+r^2w''(r^2)} e^{-\phi}\, d\mu. \]
\end{prop}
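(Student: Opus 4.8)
The plan is to mimic the approach of Proposition~\ref{prop:L21}, but now the difficulty is that we only have lower Ricci bounds and the singular set $X\setminus\mathcal{R}_\epsilon$ has codimension only two, so the ``locally bounded harmonic form'' trick from the two-sided case is unavailable. Instead we reduce the estimate on the tangent cone $X$ to a family of approximate H\"ormander estimates on the smooth K\"ahler manifolds $B(p_i,R)$ that converge to $B(o,R)$, and then pass to the limit. Concretely, fix $R$ large enough that $h$ is supported in $B(o,R)\cap\mathcal{R}_\epsilon$. By the structure theory of \cite{LiuSz,CJN}, on the $\epsilon$-regular set $\mathcal{R}_\epsilon$ the complex structures and K\"ahler forms of the $B(p_i,R)$ converge locally to those of $X$, so one can transplant $h$ (via the almost-holomorphic identifications on $\mathcal{R}_\epsilon$, after a cutoff $\chi_{\delta_i}$ near the singular set) to smooth compactly supported functions $h_i$ on $B(p_i,2R)$, with $\alpha_i=\bar\partial h_i\to\alpha$ in the appropriate weighted $L^2$ sense.

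The core analytic input is the Bochner--Kodaira--H\"ormander inequality on each $B(p_i,2R)$: for the weight $\phi_i=w(r_i^2)$ with $r_i$ the distance to $p_i$, the Ricci term $\mathrm{Ric}(\omega_i)>-i^{-1}\omega_i$ is negligible and the Hessian of $\phi_i$ contributes the positive lower bound $w'(r_i^2)+r_i^2 w''(r_i^2)$ (this is exactly where the condition \eqref{w}, namely $w'+tw''>0$, is used — it is the eigenvalue of $\ddb w(r^2)$ in the radial direction, while $w'>0$ handles the spherical directions). So for each $i$ we solve $\bar\partial f_i=\alpha_i$ on a slightly shrunk domain with
\[
\int |f_i|^2 e^{-\phi_i}\,\omega_i^n \leq \int \frac{|\alpha_i|^2}{w'(r_i^2)+r_i^2 w''(r_i^2)} e^{-\phi_i}\,\omega_i^n + o(1),
\]
the $o(1)$ accounting for the cutoff errors $\|\nabla\chi_{\delta_i}\|_{L^2}$, which can be controlled because the singular set has finite codimension-two Minkowski content (so these cutoff gradients are uniformly $L^2$-small on compact sets, after choosing $\delta_i\to0$ slowly).

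The last step is the limit $i\to\infty$. The uniform $L^2$ bound on the $f_i$, together with elliptic estimates on $\mathcal{R}_\epsilon$ (where the metrics converge smoothly in holomorphic charts, up to bounded factors relating $d\mu$ and $d\mu_0$), gives locally uniform convergence of a subsequence $f_i\to f$ on $\mathcal{R}_\epsilon$ with $\bar\partial f=\alpha$, and lower semicontinuity of the $L^2$ norm under this convergence yields the claimed inequality with the measure $\mu$ on the left (using $d\mu>C^{-1}d\mu_0$ on compacts and then exhausting). I expect the main obstacle to be making the transplantation $h\mapsto h_i$ and the reverse limit $f_i\to f$ genuinely rigorous: one must (a) exhibit the almost-holomorphic diffeomorphisms between $\mathcal{R}_\epsilon$ and the smooth approximants with the needed $C^\infty_{loc}$ convergence of complex structures (citing \cite{LiuSz}), and (b) ensure the cutoff near $S$ does not destroy the $\bar\partial$-closedness or inflate the right-hand side — this is where the codimension-two (rather than four) nature of $S$ forces a more delicate choice of $\chi_{\delta_i}$ than in Proposition~\ref{prop:L21}, and is the technical heart of the argument.
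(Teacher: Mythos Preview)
Your overall strategy of approximating by smooth spaces is right, but the ``core analytic input'' has a genuine gap. The weight $\phi_i = w(r_i^2)$ built from the distance function $r_i$ on $B(p_i,R)$ is \emph{not} pointwise plurisubharmonic: Cheeger--Colding only gives $|\sqrt{-1}\partial\bar\partial(b_i^2/2) - g_i|$ small in $L^2$, not pointwise, so the curvature term in Bochner--Kodaira has the wrong sign on a set of positive measure and you cannot directly invoke H\"ormander to solve $\bar\partial f_i=\alpha_i$ with the stated bound. (A side remark: your claim that the cutoff gradients $\|\nabla\chi_{\delta_i}\|_{L^2}$ are small is also off --- with codimension-two Minkowski content this norm is merely \emph{bounded}, not tending to zero; that is exactly why the argument of Proposition~\ref{prop:L21} does not transfer. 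In any case no such cutoff is needed, since $h$ is already compactly supported in $\mathcal{R}_\epsilon$.)

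The paper's argument is dual and more delicate. Rather than solving on each $B(p_i,R)$ and passing $f_i\to f$, it proves the basic inequality \eqref{eq:basicineq} on $X$ for every test $(n,n-1)$-form $\eta$ compactly supported in $\mathcal{R}_\epsilon$, then invokes Hahn--Banach/Riesz to produce $f$. One transfers $\eta$ (not $h$) to forms $\eta_i$ on $B(p_i,R)$, sets $v_i=*_{\phi_i}\eta_i$, and decomposes $v_i=v_i^{(1)}+v_i^{(2)}$ with $\bar\partial v_i^{(1)}=0$. The weight $\phi_i=w(\rho_i)$ uses a hybrid potential $\rho_i$: on the finitely many charts covering $\mathrm{supp}(\eta)$ one glues genuine local K\"ahler potentials so that $\sqrt{-1}\partial\bar\partial\rho_i$ is close to $g_i$ \emph{pointwise} there, while off the charts $\rho_i$ reverts to the Cheeger--Colding function $b_i^2/2$. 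On that bad region $v_i^{(1)}$ is a harmonic $(0,1)$-form (since $v_i$ vanishes there), and a separate Moser-iteration result (Lemma~\ref{lem:Moser}) supplies the uniform $L^\infty$ bound $|v_i^{(1)}|^2 e^{-\phi_i}\leq C\|v_i\|_{L^2}^2$; combined with the $L^2$ Hessian estimate this controls the Bochner--Kodaira curvature term where the weight is only approximately plurisubharmonic. This $L^\infty$ bound, standing in for the codimension-four cutoff trick, is the technical heart of the proof, and your proposal contains no analogue of it.
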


\begin{rem}\label{rm1}
It will be clear from the proof that we can add weight functions with
logarithmic poles on $\mathcal{R}_\epsilon$. This will be useful to
separate tangents in Proposition~\ref{prop:separate}. 
\end{rem}
\begin{rem}
We must restrict ourselves to exact forms $\alpha$ for technical
reasons, but this will not matter in our application. We will define
the norm $|\alpha|$ precisely below by taking a limit of the corresponding norms
on smooth spaces. 
\end{rem}

As in the proof of Proposition~\ref{prop:L21}, we follow the approach in
Demailly~\cite{Dem} to H\"ormander's $L^2$ estimate, but we need to
take care with working on the non-smooth space
$\mathcal{R}_\epsilon$. We
construct $f$ satisfying the equation $\bar\partial f = \alpha$ in a
weak sense, i.e. satisfying
\[ \label{eq:weaksoln} \int_{\mathcal{R}_\epsilon} f\, \bar\partial\eta =
  \int_{\mathcal{R}_\epsilon} h\, \bar\partial\eta \]
for all smooth $(n,n-1)$-forms $\eta$ with compact support in
$\mathcal{R}_\epsilon$. In order to construct $f$ as an element of
$L^2(e^{-\phi}d\mu)$, we need to define the function $*\bar\partial \eta
\in L^2(e^{-\phi}d\mu)$. For this, as well as for the definition of
the norm $|\alpha|$, we observe that in any holomorphic chart on
$\mathcal{R}_\epsilon$ we have a well-defined tensor $g^{j\bar k}$
with bounded measurable components, corresponding to the inverse of the
metric. More precisely we have the following.
\begin{lem}\label{lem:inverseg}
Suppose that $q_i \in B(p_i, R)$ satisfy $q_i\to q \in
      \mathcal{R}_\epsilon$, and $z_{ij}$ are holomorphic charts on
      small balls $B(q_i, \rho)$ converging to a holomorphic chart
      $z_j$ on $B(q, \rho)$. We use the charts to identify functions
      on $B(q_i, \rho)$ with functions on $B(q,\rho)$. Then the
      inverses $g_i^{j\bar k}$ of the metric tensors on $B(q_i, \rho)$
      converge in $L^p(d\mu)$ for all $p$ to a tensor
      $g^{j\bar k}$ on $B(q,\rho)$ with bounded components. 
\end{lem}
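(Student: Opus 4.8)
The plan is to combine the convergence theory for the metrics $g_i$ on the $\epsilon$-regular set established in \cite{LiuSz} with a standard argument that converts weak convergence of the metrics, together with a uniform two-sided bound away from the singular set, into strong $L^p$ convergence of the inverses. First I would recall from \cite{LiuSz} that on the $\epsilon$-regular set the holomorphic charts can be chosen so that the metric tensors $g_{i,j\bar k}$ in these charts satisfy uniform estimates: they are comparable to the Euclidean metric (with constants depending only on $n$ and the non-collapsing data), and the volume measures $d\mu_i$ converge to $d\mu$ and are comparable to $d\mu_0$ on compact subsets of $B(q,\rho)$. In particular the $g_i^{j\bar k}$ are uniformly bounded measurable tensors on $B(q,\rho)$, so after passing to a subsequence they converge weakly-$*$ in $L^\infty$, hence weakly in every $L^p(d\mu)$, to some bounded measurable tensor $g^{j\bar k}$; the content of the lemma is that this convergence is in fact strong and that the limit does not depend on the subsequence.

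Next I would identify the limit. Away from the singular set the charts $z_{ij}\to z_j$ converge smoothly as complex charts, and the metrics $g_i$ converge in $C^0$ (indeed in $C^\infty$, by the $\epsilon$-regularity and Anderson/Cheeger-Colding type estimates) on compact subsets of $\mathcal{R}_\epsilon$ to the limit metric $g$ on $B(q,\rho)$. Hence $g_i^{j\bar k}\to g^{j\bar k}$ pointwise $\mu$-a.e.\ on $B(q,\rho)$, and this pointwise limit agrees with the weak limit, so $g^{j\bar k}$ is the honest inverse of the limit metric tensor. To upgrade to $L^p$ convergence, I would invoke the fact that pointwise a.e.\ convergence of a uniformly bounded sequence, on a finite measure space, implies convergence in $L^p$ for every finite $p$: here one covers $B(q,\rho)$ by a compact exhaustion, uses that $\mu(B(q,\rho))<\infty$, that $\|g_i^{j\bar k}\|_{L^\infty}\le C$ uniformly, and applies the dominated convergence theorem (or Vitali's convergence theorem / Egorov). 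The uniform $L^\infty$ bound supplies the dominating function $C^p$, and since every subsequence has a further subsequence converging a.e.\ to the same limit, the full sequence converges in $L^p(d\mu)$.

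The one point requiring genuine care — and what I expect to be the main obstacle — is controlling the behavior near the singular set $X\setminus\mathcal{R}_\epsilon$, since a priori the chart $B(q,\rho)$ is an $\epsilon$-regular neighborhood but may still meet the non-collapsed singular set of $X$. The resolution is that the uniform two-sided bound $C^{-1}\,(\text{Euclidean})\le g_i\le C\,(\text{Euclidean})$ from \cite{LiuSz} holds on all of $B(q,\rho)$ once $\rho$ is small (this is exactly the statement that $\mathcal{R}_\epsilon$ is a complex manifold with bounded, though possibly non-smooth, metric), so the $g_i^{j\bar k}$ are uniformly bounded on the whole chart and no mass can escape to lower-dimensional sets; the set where smooth convergence might fail has $\mu$-measure zero by the rectifiability and finite $(2n-2)$-Minkowski content of $X\setminus\mathcal{R}_\epsilon$ from \cite{CJN}, which is harmless for $L^p$ convergence against $d\mu$. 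Thus the only real work is assembling the compactness input from \cite{LiuSz} in the precise form needed; the measure-theoretic upgrade to $L^p$ is then routine.
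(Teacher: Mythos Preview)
There is a genuine gap in your argument. You assert that ``the metrics $g_i$ converge in $C^0$ (indeed in $C^\infty$, by the $\epsilon$-regularity and Anderson/Cheeger-Colding type estimates) on compact subsets of $\mathcal{R}_\epsilon$,'' but this is precisely what is \emph{not} available under only a lower Ricci bound. Anderson's $\epsilon$-regularity theorem requires a two-sided Ricci bound; the paper's introduction explicitly notes that on $\mathcal{R}_\epsilon$ the limit metric ``may not be smooth.'' Under Ricci lower bounds alone, the Cheeger--Colding theory gives only integral estimates for the metric in good coordinates: one knows that $g_{i,p\bar q}$ is close to $\delta_{p\bar q}$ in an $L^1$/$L^2$-averaged sense on small balls, not pointwise. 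So neither $C^0$ convergence nor even pointwise $\mu$-a.e.\ convergence of $g_i^{j\bar k}$ is justified, and your dominated-convergence step does not get off the ground. (Your later paragraph about the singular set $X\setminus\mathcal{R}_\epsilon$ is a red herring here: the ball $B(q,\rho)$ lies entirely in $\mathcal{R}_\epsilon$, and the difficulty is the lack of pointwise control \emph{inside} $\mathcal{R}_\epsilon$.)

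The paper's proof works around this by establishing $L^1$ convergence directly, without ever invoking pointwise convergence. It shows the sequence $g_a^{j\bar k}$ is Cauchy in $L^1(d\mu)$ by reducing, via a covering argument and the fact that the fully regular points have full $\mu$-measure, to the estimate
\[ \lim_{\kappa\to 0}\ \lim_{a,b\to\infty}\ \fint_{B(x,\kappa)}|g_a^{j\bar k}-g_b^{j\bar k}|\,d\mu=0 \]
at each regular point $x$. This in turn comes from the Cheeger--Colding integral estimate $\fint_{\kappa^{-1}B(x_i,\kappa)}|g_i^{p\bar q}-\delta^{p\bar q}|\,d\mu_i<\Psi(\kappa,i^{-1}\,|\,x)$ in auxiliary holomorphic coordinates $w_p$ adapted to the small ball, together with the uniform gradient bound on the original coordinates $z_j$ to change back. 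The upgrade from $L^1$ to all $L^p$ then follows from the uniform $L^\infty$ bound, exactly as you note. In short, the missing ingredient in your outline is this reduction to the Cheeger--Colding integral estimate at regular points; the soft weak-compactness-plus-DCT route fails because the pointwise convergence it requires is simply not known in this setting.
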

\begin{proof}
  We show that we have
  \[ \int_{B(q, \rho)} |g_a^{j\bar k} - g_b^{j\bar k}|\,d\mu \to 0, \]
  as $a, b\to\infty$. Note first that we have a uniform upper bound
  $|g_a^{j\bar k}| < C$. Using that the regular points in
  $\mathcal{R}_\epsilon$ have full measure, together with
  a covering argument, it is enough
  to show that for all regular $x\in B(q,\rho)$ we have
  \[ \label{eq:a3} \lim_{\kappa\to 0} \lim_{a,b\to\infty} \fint_{B(x,\kappa)}
    |g_a^{j\bar k} - g_b^{j\bar k}|\,d\mu = 0. \]
  Consider a ball $B(x, \kappa)$, and $x_i\in B(p_i, R)$ such that
  $x_i\to x$. We can find holomorphic coordinates $w_{ip}$ on
  $B(x_i,\kappa)$ converging to $w_p$ on $B(x,\kappa)$, which give
  Gromov-Hausdorff approximations to the corresponding Euclidean
  balls. Let us rescale distances by $\kappa^{-1}$. The
  Cheeger-Colding estimate~\cite{CC2} shows that the corresponding
  metric components $g^{p\bar q}_i$ satisfy
  \[ \label{eq:a2} \fint_{\kappa^{-1}B(x_i, \kappa)} |g_i^{p\bar q} -
    \delta^{p\bar q}|\,d\mu_i < \Psi(\kappa, i^{-1} | x). \]
  Here, and below, $\Psi(\delta_1,\ldots,\delta_k|a_1,\ldots,a_l)$
  denotes a function converging to zero as $\delta_i\to 0$ while the
  $a_j$ are fixed. 
  In addition we have $g_{i,p\bar q} > (1-\Psi(\kappa, i^{-1}|x))
  \delta_{p\bar q}$. It follows from this and Colding's volume
  convergence~\cite{C} that in \eqref{eq:a2} we can 
  replace the measure $d\mu_i$ with $d\mu$. The gradient
  estimate for the coordinates $z_j$ implies that the Jacobian matrix
  $dz_j/dw_p$ is uniformly bounded. It follows that
  \[ \fint_{\kappa^{-1}B(x_i, \kappa)} |g_i^{j\bar k} -
    (dz_j/dw_p)\overline{(dz_k/dw_q)} \delta^{p\bar q}|\,d\mu <
    \Psi(\kappa, i^{-1} | x), \]
  which impiles \eqref{eq:a3}. The sequence $g_a^{j\bar k}$ therefore
  converges to a limit $g^{j\bar k}$ in $L^1(d\mu)$, and since these
  components are uniformly bounded, the convergence is in $L^p$ for
  all $p$ as well.

  If we have another set of charts $z_{ij}'$ converging to $z_j'$,
  then the corresponding components $g'^{j\bar k}$ are related to
  $g^{j\bar k}$ in the usual way. This follows from the fact that the
  transition functions $z_{ij}\circ z_{ij}'^{-1}$ converge to $z_j\circ
  z_j'^{-1}$ as $i\to \infty$. 
\end{proof}

In terms of the metric components $g^{j\bar k}$ we can now define
$|\alpha|^2 = g^{j\bar k} \alpha_{\bar k}\overline{\alpha_{\bar j}}$
in the usual way. Note that if we have local coordinates $z_{ij}$
converging to $z_j$ on $B(q,\rho)$ as above, such that $\phi_i,
\rho_i$ converge uniformly to $\phi, r^2$, then we have
\[  \label{eq:intalphaconverge}
  \lim_{i\to\infty} \int_{B(q_i, \rho)}
    \frac{|\alpha|_{g_i}^2 e^{-\phi_i}}{w'(\rho_i)+\rho_i w''(\rho_i)} \,d\mu_i = \int_{B(q, \rho)}
    \frac{|\alpha|^2 e^{-\phi}}{w'(r^2)+r^2w''(r^2)} \,d\mu. \]
This follows from the convergence of the metric components $g^{j\bar
  k}_i$ to $g^{j\bar k}$, together with the convergence~\cite{C} of the
measures $\mu_i$ to $\mu$. 

Similarly, we can define $*\bar\partial\eta$ in local coordinates by
letting
\[ \overline{*\bar\partial\eta} = e^\phi
  (\bar\partial\eta)_{1\bar 1\ldots n\bar n} \det(g^{j\bar k}). \]
In  terms of coordinates $z_{ij}$ converging to $z_j$ as above, we then
  have that $*_{\phi_i}\bar\partial\eta$ converges in $L^p$ to
  $*\bar\partial\eta$, and in addition we have 
      \[ \int_{\mathcal{R}_\epsilon} f \overline{*\bar\partial\eta}
        e^{-\phi}\,d\mu = \int_{\mathcal{R}_\epsilon} f
        \bar\partial\eta \]
      for any   $f\in L^2(e^{-\phi} d\mu)$. Note that such an $f$ is
      also locally in $L^2$ with respect to $d\mu_0$ since $\phi$ is
      locally bounded, and so the right hand side is well defined.

\begin{proof}[Proof of Proposition~\ref{prop:L2existence}]
Given a smooth $(n,n-1)$-form $\eta$ with compact support in
$\mathcal{R}_\epsilon$, our goal is to prove the inequality
\[ \label{eq:basicineq} \left(\int_{\mathcal{R}_\epsilon}
    h\,\overline{*\bar\partial\eta} e^{-\phi}\,d\mu\right)^2 &=
  \left(\int_{\mathcal{R}_\epsilon} h\,\dbar \eta\right)^2 \\ &\leq \int_{\mathcal{R}_\epsilon}
  \frac{|\alpha|^2 e^{-\phi}}{w'(r^2)+r^2w''(r^2)}\,d\mu \int_{\mathcal{R}_\epsilon}
  |*\dbar\eta |^2e^{-\phi}\,d\mu. \]
The existence of the required function $f$ then follows in the
standard way using the Hahn-Banach and Riesz representation
theorems. Indeed, if we denote by $E\subset L^2(e^{-\phi}d\mu)$ the closure of
the subspace of functions of the form $\overline{*\bar\partial\eta}$
with compact support, then \eqref{eq:basicineq} implies 
that $h$ defines a linear functional $\lambda: E\to\mathbf{C}$
with norm
\[ \Vert \lambda\Vert \leq \left(\int_{\mathcal{R}_\epsilon}
    \frac{|\alpha|^2 e^{-\phi}}{w'(r^2)+r^2w''(r^2)}\,d\mu\right)^{1/2}. \]
We then deduce the existence of $f\in E\subset L^2(e^{-\phi}d\mu)$ with the
same bound on its norm, such that
\eqref{eq:weaksoln} holds for all smooth $\eta$ with compact
support. It then follows that $\bar\partial f = \bar\partial h$. 

We prove the inequality \eqref{eq:basicineq} using approximations by
smooth spaces. 
Let $\mathcal{U}\subset \mathcal{R}_\epsilon$ be an open relatively
compact subset containing the closure of the support of $\eta$.
Let us fix a large radius $R > 0$, such that $\mathcal{U}\subset B(o,
R/2)$. Recall that $B(o, R)$ is a non-collapsed
Gromov-Hausdorff limit of a sequence of balls $B(p_i, R)$ in K\"ahler
manifolds with $\mathrm{Ric} > -i^{-1}$. For every
$q\in B(o, R)\cap \mathcal{U}$, by definition we have a radius $\delta$,
such that $B(q, \delta)$ is $\Psi(\epsilon)$-Gromov-Hausdorff close to the Euclidean
ball. Suppose that $q_i\in B(p_i, R)$ are such that $q_i\to q$. By
Theorem 2.1 in \cite{LiuSz}, we have $\delta'$ such that on each $B(q_i,
\delta')$ there is a holomorphic chart giving a Gromov-Hausdorff
approximation to the Euclidean ball, and a K\"ahler potential close to
$\frac{1}{2}d(q_i, \cdot)^2$. These holomorphic charts converge, as $i\to\infty$,
to a holomorphic chart on $B(q,\delta')$ defining the holomorphic
structure on $\mathcal{U}$. 

 We first construct suitable weight functions $\phi_i$ on $B(p_i,
  R)$. Since $B(p_i, R)\to B(o, R)$ in the Gromov-Hausdorff sense, by
  Cheeger-Colding~\cite{CC2} we have functions $b_i$ on $B(p_i, R)$
  such that $\Delta b_i^2/2 = n$, $|\nabla b_i| < 1 + \Psi(i^{-1})$ on
  $B(p_i, R-1)$, 
  \[ \label{eq:CC}\int_{B(p_i, R)} |\sqrt{-1} \partial\bar\partial b_i^2/2 - g_i|^2 + ||\nabla b_i|-1|^2 <
    \Psi(1/i), \]
  and $|b_i - r_i| < \Psi(1/i)$, in terms of the distance $r_i$
  from $p_i$. In other words $b_i^2/2$ is plurisubharmonic in an $L^2$
  sense, but this is not good enough to get an inequality of the form
  \eqref{eq:basicineq}. We will therefore modify $b_i^2/2$ on the set
  $\mathcal{U}$ to make it plurisubharmonic in a pointwise sense
  there. 

  Let us cover $\mathcal{U}\subset B(o, R/2)$ using charts $U_j$,
  such that the smaller charts $\frac{1}{10}U_j$ still cover. Under the
  Gromov-Hausdorff convergence these define charts $U_{ij}$ centered
  at points $q_{ij} \in B(p_i, R)$. 
On each $U_{ij}$ there is a K\"ahler
  potential $\psi_{ij}$, close to the function $\frac{1}{2} d(q_{ij},
  \cdot)^2$, the closeness determined by how close to the Euclidean
  ball our chart is in the Gromov-Hausdorff sense (this is controlled
  by our choice of $\epsilon(n)$ above). 

  At the same time, on each $U_{ij}$ we also have K\"ahler potentials
  $\phi_{ij}$, which are close to $\frac{1}{2}r_i^2$ (where $r_i$ is
  the distance from $p_i$ in the ball $B(p_i, R)$), in the sense
  that
  \[ |\phi_{ij} - r_i^2/2| < \Psi(1/i), \]
  where $r_i = d(p_i, \cdot)$ as above. This is because if we let
  $\psi_{\infty j}$ be the limit of the potentials $\psi_{ij}$ as $i\to\infty$ along
  a subsequence, then $\partial\bar\partial (\psi_{\infty j} - r^2/2) =
  0$ on $U_j$ (see \cite[Claim 3.1]{LiuSz} for a similar result). We can then define
  $\phi_{ij} = \psi_{ij} + (r^2/2 - \psi_{\infty j})$ using the local
  coordinates to identify $U_{ij}$ with $U_j$.

  It is clear that on the overlap $U_{ij}\cap U_{ik}$, $|\phi_{ij} -
  \phi_{ik}|<\Psi(1/i)$. We can ensure that on a smaller compact set
  of the intersection, $|\nabla(\phi_{ij} - \phi_{ik})|$ is small. 
  Using a partition of unity on $\bigcup_j \frac{1}{2}U_{ij}$, we can
  glue the
  $\phi_{ij}$ to obtain a function $\tilde{\rho_i}$ that
  satisfies 
  \[\label{eq0} \left|\tilde{\rho_i} -
    \frac{r_i^2}{2}\right| + |\partial\overline\partial\tilde{\rho_i} -
    g_i|<\Psi(1/i),\\|\nabla\tilde{\rho_i}|\leq
    r_i +\Psi(1/i), |\nabla (\tilde\rho_i -
    b_i^2/2)|<\Psi(1/i),\]
  on $\bigcup_j \frac{1}{3}U_{ij}$. 
  Note that since the partition of unity is defined using the charts,
  the functions giving the partition of unity have uniformly bounded
  derivatives.

  Similarly, we can define a cut-off function $\mu_i$, supported on
  $\bigcup_j\frac{1}{3}U_{ij}$, so that $\mu_i = 1$ on
  $\bigcup_j\frac{1}{6}U_{ij}$.  Furthermore, \begin{equation}\label{eq2}|\nabla\mu_i|+|\partial\overline\partial\mu_i|\leq C,\end{equation} where $C$ is independent of $i$.
 Now we define 
\begin{equation}\label{eq3}\rho_i = \mu_i\tilde{\rho_i} +
  (1-\mu_i)b_i+\epsilon_i,\end{equation} 
where $\epsilon_i$ is a positive sequence converging to zero so that $\rho_i\geq 0$.  Set \begin{equation}\label{eq4}\phi_i = w(\rho_i).\end{equation}
 
 From \eqref{eq0}, \eqref{eq2}, \eqref{eq3},\eqref{eq4} and standard computation, we find that on $\bigcup_j\frac{1}{6}U_{ij}$
\begin{equation}\label{eq5}
  \sqrt{-1}\partial\bar\partial \phi_i \geq (w'(\rho_i)+\rho_iw''(\rho_i)-\Psi(1/i))g_i, \end{equation}
and we also have \begin{equation}\label{eq6}
  \int_{B(p_i, R-1)} | \sqrt{-1}\partial\bar\partial \phi_i - (w'(\rho_i)g_i+\sqrt{-1}w''(\rho_i)\partial\rho_i\wedge\overline\partial\rho_i)|^2\leq \Psi(1/i).
  \end{equation}

  Let us consider now the $(n,n-1)$-form $\eta$. Let $\gamma_j$ be a
  partition of unity subordinate to the cover $U_j$ of
  $\mathcal{U}$. We can view each $\gamma_j \eta$ as an $(n,n-1)$ form
  on $U_{ij} \subset B(p_i, R)$, and define the form $\eta_i$ as their
  sum. Define $v_i = *_{\phi_i} \eta_i$, where we are viewing $\eta_i$ as
  an $(n,n-1)$-form with values in the trivial bundle $L^*$ with metric
  $e^{\phi_i}$ (i.e. the dual of the trivial bundle $L$ with metric
  $e^{-\phi_i}$), and $v_i$ as a $(0,1)$-form with values in $L$. On $B(p_i, R)$, under the
  $L^2$-product with weight $e^{-\phi_i}$ we decompose $v_i =
  v_i^{(1)} + v_i^{(2)}$, where $\bar\partial v_i^{(1)} = 0$, and
  $v_i^{(2)} \perp \mathrm{ker} \bar\partial$. It follows that
  $\bar\partial^*_{\phi_i} v_i^{(2)} = 0$, and so also
  $\bar\partial^*_{\phi_i} v_i^{(1)} = 0$ outside of the support of
  $v_i$. By Lemma~\ref{lem:Moser} below we have that \begin{equation}\label{eq7}|v_i^{(1)}|^2e^{-\phi_i} <
  C\Vert v_i\Vert_{L^2(e^{-\phi_i})}^2\end{equation} on the set $B(p_i, R-1)\setminus \bigcup
  \frac{1}{6} U_{ij}$, for a constant $C$ independent of $i$.  

  Let
  $\chi_R$ be a cutoff function, supported in $B(p_i, R-1)$, and equal
  to 1 on $B(p_i, R/2)$, such that $|\nabla\chi_R| < C'/R$, for a
  constant $C'$ independent of $R$. We can regard the $(0, 1)$-form
  $\chi_R v_i^{(1)}$ as a $(n, 1)$-form valued in the anti-canonical
  line bundle. From the Bochner-Kodaira formula (see $(4.7)$ of
  \cite{Dem}), 
  we have
  \[\label{eq:b1} \int_{B(p_i, R)} (\sqrt{-1}\partial_j\partial_{\bar k} &\phi_i + \mathrm{Ric}^{g_i}_{j\overline{k}})  (\chi_R v_i^{(1)})_{\bar j} \overline{ (\chi_R
      v_i^{(1)})_{\bar k}} e^{-\phi_i} \\ &\leq \Vert \bar\partial(\chi_R
    v_i^{(1)})\Vert^2_{L^2(e^{-\phi_i})}  + \Vert
    \bar\partial^*_{\phi_i}(\chi_R
    v_i^{(1)})\Vert^2_{L^2(e^{-\phi_i})}. \]

  To estimate the left hand side, note that writing $\mathcal{V}_i =
  \bigcup \frac{1}{6}U_{ij}$, \eqref{eq5} implies that we have
  \[ \int_{\mathcal{V}_i} (\sqrt{-1}\partial_j\partial_{\bar k} \phi_i
    + &Ric^{g_i}_{j\overline{k}}) (\chi_R
    v_i^{(1)})_{\bar j} \overline{ (\chi_R 
      v_i^{(1)})_{\bar k}} e^{-\phi_i} \\
      &\quad \geq 
      \int_{\mathcal{V}_i} (w'(\rho_i)+\rho_iw''(\rho_i)-\Psi(1/i)) |\chi_R v_i^{(1)}|^2 e^{-\phi_i}, \]
    while  also using \eqref{w}, \eqref{eq:CC}, \eqref{eq6} and \eqref{eq7} we have
  \[ &\int_{B(p_i, R)\setminus \mathcal{V}_i} (\sqrt{-1}\partial_j\partial_{\bar k} \phi_i + Ric^{g_i}_{j\overline{k}})(\chi_R
    v_i^{(1)})_{\bar j} \overline{ (\chi_R 
      v_i^{(1)})_{\bar k}} e^{-\phi_i} \\
    &\quad \geq  -\Psi(1/i)\int_{B(p_i, R)\setminus \mathcal{V}_i}|\chi_R v_i^{(1)}|^2
    e^{-\phi_i} + \int_{B(p_i, R)\setminus \mathcal{V}_i}  (w'(\rho_i)+\rho_iw''(\rho_i)) |\chi_R v_i^{(1)}|^2
  e^{-\phi_i} \\
  &\qquad - \int_{B(p_i,R)\setminus \mathcal{V}_i} |\sqrt{-1}
    \partial\bar\partial \phi_i - (w'(\rho_i)g_i+\sqrt{-1}w''(\rho_i)\partial\rho_i\wedge\overline\partial\rho_i)| |\chi_R v_i^{(1)}|^2
    e^{-\phi_i} \\
  &\qquad - \int_{B(p_i,R)\setminus \mathcal{V}_i}w''(\rho_i)(\rho_i(g_i)_{j\overline{k}} - \sqrt{-1}\partial_j\rho_i\wedge\partial_{\overline{k}}\rho_i) (\chi_R v_i^{(1)})_{\bar j} \overline{ (\chi_R
    v_i^{(1)})_{\bar k}} e^{-\phi_i}\\
  & \quad \geq \int_{B(p_i, R)\setminus \mathcal{V}_i}  (w'(\rho_i)+\rho_iw''(\rho_i))|\chi_R v_i^{(1)}|^2
    e^{-\phi_i} -\Psi(i^{-1}|R). \]

  Notice that we used the assumption $w''\leq 0$ above.
  As for the right hand side of \eqref{eq:b1}, we use that
  $\bar\partial v_i^{(1)} = 0$, and $\bar\partial^*_{\phi_i} v_i^{(1)}
  = \bar\partial^*_{\phi_i} v_i$, as well as the bound $|\nabla
  \chi_R| < C'/R$, to get
  \[  \Vert \bar\partial(\chi_R
    v_i^{(1)})\Vert^2_{L^2(e^{-\phi_i})} + \Vert
    \bar\partial^*_{\phi_i}(\chi_R
    v_i^{(1)})\Vert^2_{L^2(e^{-\phi_i})} \leq \Vert
    \bar\partial^*_{\phi_i} v_i\Vert^2_{L^2(e^{-\phi_i})} +
    \frac{C'}{R} \Vert v_i\Vert^2_{L^2(e^{-\phi_i})}. \]
  Note that by construction the $v_i$ are bounded independently of
  $i$, and so their $L^2$-norms are uniformly bounded. It follows that
  for sufficiently large $i$ we have
  \[\int_{B(p_i, R)} (w'(\rho_i)+\rho_iw''(\rho_i)) |\chi_R v_i^{(1)}|^2
    e^{-\phi_i} \leq \Vert
    \bar\partial^*_{\phi_i} v_i\Vert^2_{L^2(e^{-\phi_i})} +
    \frac{C'}{R}. \]

  We now use the assumption that $\alpha = \bar\partial h$ is
  exact. Using the cutoff functions $\gamma_j$ from before, we define
  smooth functions $h_i$ on $B(p_i, R)$ analogously to the way we
  defined $\eta_i$. We then let $\alpha_i = \bar\partial h_i$.
  By taking $R$ large, we can assume
  that the supports of $h_i, \alpha_i$ are in $B(p_i, R/2)$. 

  Since $\bar\partial^*_{\phi_i} v_i^{(2)} = 0$, we have
  \[ \label{eq:a1}
    &\left(\int_{B(p_i, R)} h_i \overline{\bar\partial^*_{\phi_i} v_i}
    e^{-\phi_i}\right)^2 = \left(\int_{B(p_i, R/2)} \langle \bar\partial h_i, \chi_Rv_i^{(1)}\rangle e^{-\phi_i}\right)^2 \\
  &\qquad\leq \int_{B(p_i, R/2)}
    \frac{|\alpha_i|^2e^{-\phi_i}}{w'(\rho_i)+\rho_iw''(\rho_i)} \int_{B(p_i R/2)} (w'(\rho_i)+\rho_iw''(\rho_i))
    |\chi_Rv_i^{(1)}|^2e^{-\phi_i} \\
  &\qquad\leq  \int_{B(p_i, R/2)}
    \frac{|\alpha_i|^2e^{-\phi_i}}{w'(\rho_i)+\rho_iw''(\rho_i)} \left( \Vert
      \bar\partial^*_{\phi_i} v_i\Vert^2_{L^2(e^{-\phi_i})} +
      \frac{C'}{R} \right),\]
  for $i$ large enough. Note that in the first equality we used
  that the support of $h_i$ is contained in $B(p_i, \frac{R}{2})$
  where $\chi_R = 1$. 

  Next we want to take the limit as $i\to \infty$. For this note first
  that
  \[ \int_{B(p_i,R)} h_i \overline{\bar\partial^*_{\phi_i} v_i}
    e^{-\phi_i} = \int_{B(p_i, R)} h_i\,
    \overline{*_{\phi_i}\bar\partial\eta_i} e^{-\phi_i}\,d\mu_i. \]
  Let us consider one of our charts $U_j$, with coordinates $z_{ik}$
  on $U_{ij}$ converging to $z_k$ on $U_j$. Note that in terms of
  these coordinates $\eta_i$ is not identified exactly with
  $\eta$ because of the way we defined $\eta_i$ in terms of the cutoff
  functions $\gamma_j$. However, if $U_j'$ is another chart, with corresponding
  coordinates $z_{ik}'$ converging to $z_k'$, then the transition
  functions $z_{ik}'\circ z_{ik}^{-1}$ converge smoothly to
  $z_k'\circ z_k^{-1}$. It follows that if we use our coordinates to
  identify $U_{ij}$ with $U_j$, then $\eta_i$ converges smoothly to
  $\eta$ as $i\to \infty$. Similarly $h_i$ converges smoothly to $h$ and
  in addition $\phi_i$ converges uniformly to
  $\phi$. It then follows from Lemma~\ref{lem:inverseg} that
  \[ \lim_{i\to\infty} 
\int_{B(p_i, R)} h_i\,
\overline{*_{\phi_i}\bar\partial\eta_i} e^{-\phi_i}\,d\mu_i =
\int_{B(p, R)} h\,
\overline{*\bar\partial\eta} e^{-\phi}\,d\mu.\]
  Similarly we can take the limit as $i\to\infty$ of the right hand
  side of \eqref{eq:a1}, noting that $\bar\partial^*_{\phi_i} v_i =
  *_{\phi_i}\bar\partial\eta_i$, and using Lemma~\ref{lem:inverseg}. 
  We therefore have
  \[ \left(\int_{B(p, R)} h\, 
\overline{*\bar\partial\eta} e^{-\phi}\,d\mu\right)^2 \leq  \int_{B(p, R/2)}
    \frac{|\alpha|^2e^{-\phi}}{w'(r^2)+r^2w''(r^2)}\,d\mu \left( \Vert *\bar\partial\eta\Vert^2_{L^2(e^{-\phi})} +
      \frac{C'}{R} \right),\]
  Finally, we obtain the required inequality \eqref{eq:basicineq} by letting $R\to
  \infty$. 
\end{proof}

We used the following estimate in the proof above. 
\begin{lem}\label{lem:Moser}
  Let $B(p,2)$ be a relatively compact ball in a K\"ahler manifold
  with $\mathrm{Ric} > -1$, and $\mathrm{vol}(B(p,2)) > \nu > 0$. 
  Let us denote by $L$ the trivial bundle with metric $e^{-\psi}$, and
  suppose that $|\nabla \psi|$ is bounded. Suppose that $u$ is a harmonic
  $(0,1)$-form valued in $L$. Then
  \[ \sup_{B(p,1)} |u| \leq C\left(\int_{B(p,2)}
      |u|^2\right)^{1/2}, \]
  where $C$ depends on $\nu$, the dimension and $\sup_{B(p,2)} |\nabla\psi|$. 
\end{lem}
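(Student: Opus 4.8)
The plan is to prove Lemma~\ref{lem:Moser} by a standard Moser iteration argument applied to the function $|u|$, using the Bochner-type inequality from \eqref{eq:Deltanorm} together with the assumption that $u$ is harmonic and $|\nabla\psi|$ is bounded.

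First I would derive a differential inequality for $|u|^2$. Since $u$ is a harmonic $(0,1)$-form valued in the line bundle $L$ with curvature $F_{j\bar k} = \partial_j\partial_{\bar k}\psi$, formula \eqref{eq:Deltanorm} gives
\[ \Delta |u|^2 = |\nabla u|^2 + R_{p\bar k} u_{\bar p}\overline{u_{\bar k}} + 2F_{j\bar k} u_{\bar j}\overline{u_{\bar k}} - F_{j\bar j}|u|^2. \]
Now the terms involving $F$ are the issue: we do not assume $F$ is bounded, only that $|\nabla\psi|$ is bounded. The point is that $F = \ddb\psi$ enters here only through the Bochner formula, but one can instead avoid differentiating $\psi$ twice by working with the weighted Laplacian or, more simply, by using the Kato inequality on $|u|$ directly and absorbing the first-derivative-of-$\psi$ terms. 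Concretely, writing things out in the trivial bundle with the weight absorbed, one gets that $w = |u|$ (computed with respect to $e^{-\psi}$) satisfies, away from the zero set of $u$, a weak inequality of the form $\Delta w \geq -C(1 + |\nabla\psi|^2) w - (\text{divergence terms})$; the curvature term $-F_{j\bar j}|u|^2 = -(\Delta\psi)|u|^2$ is then handled by integration by parts against test functions, transferring one derivative onto the test function and onto $u$, which is controlled by $|\nabla u|^2 \le \Delta|u|^2 + \cdots$ hidden on the good side. Since $\mathrm{Ric} > -1$, the Ricci term contributes only $-n|u|^2$.

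Next, with such a weak subsolution inequality in hand, I would run Moser iteration on $B(p,2)$. The volume lower bound $\mathrm{vol}(B(p,2)) > \nu$ together with $\mathrm{Ric} > -1$ gives a uniform local Sobolev inequality (by Croke/Buser-type estimates, or Anderson-type bounds — these depend only on $\nu$, the dimension, and the radius), and this is precisely what is needed for De Giorgi-Nash-Moser iteration to yield
\[ \sup_{B(p,1)} |u|^2 \leq C \int_{B(p,2)} |u|^2, \]
with $C$ depending only on $\nu$, $n$, and $\sup_{B(p,2)}|\nabla\psi|$ (the latter entering through the zeroth-order coefficient $C(1+|\nabla\psi|^2)$ in the inequality). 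One technical point: $|u|$ is only Lipschitz, not smooth, across $\{u = 0\}$, but the Bochner/Kato inequality holds in the distributional sense on all of $B(p,2)$ (the singular set of $|u|$ is negligible and $|u|^2$ is smooth), so the iteration goes through for $|u|^2$ directly and one takes a square root at the end.

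The main obstacle I expect is bookkeeping the $\psi$-dependence correctly: one must make sure that only $\sup|\nabla\psi|$ — and not any bound on $\ddb\psi$ — appears in the final constant, which forces the integration-by-parts treatment of the curvature term rather than a naive pointwise estimate. Once that is arranged, the rest is the standard Moser machinery, which I would simply cite, and the resulting constant's dependence on $\nu$, $n$, $\sup_{B(p,2)}|\nabla\psi|$ is exactly as claimed.
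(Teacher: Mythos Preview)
Your proposal is correct and matches the paper's approach closely: start from the Bochner identity \eqref{eq:Deltanorm}, recognize that the curvature terms $F_{j\bar k}=\partial_j\partial_{\bar k}\psi$ cannot be bounded pointwise but can be handled by integrating by parts to trade $\partial\bar\partial\psi$ for $\nabla\psi$ paired with $\nabla u$ or $\nabla\chi$ (absorbing the resulting $\nabla u$ terms into the good $|\nabla u|^2$ from Bochner), and then run Moser iteration using the Sobolev inequality coming from $\mathrm{Ric}>-1$ and the volume lower bound. The paper carries out the integration by parts explicitly at each step of the iteration (see \eqref{eq:2} for the first step and \eqref{eq:4} for the $L^p$ step) rather than packaging it once as a divergence-form inequality for $|u|$, but this is a cosmetic difference.
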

\begin{proof}
  The proof is by Moser iteration. Note first that by
  \eqref{eq:Deltanorm}
 the norm of $u$ satisfies
  the differential inequality
  \[ \label{eq:Bochner1}
    \Delta |u|^2 &=  |\nabla u|^2 + R^{j\bar k}u_{\bar
      k}\overline{u_{\bar j}} + 2 g^{j\bar k} g^{p\bar q}
    (\nabla_j \nabla_{\bar q}\phi) u_{\bar k} \overline{u_{\bar
        p}} - g^{j\bar k} g^{p\bar q}
    (\nabla_j \nabla_{\bar k}\phi) u_{\bar q} \overline{u_{\bar
        p}} \\
    &\geq |\nabla u|^2 - |u|^2 + 2 g^{j\bar k} g^{p\bar q}
    (\nabla_j \nabla_{\bar q}\phi) u_{\bar k} \overline{u_{\bar
        p}} - g^{j\bar k} g^{p\bar q}
    (\nabla_j \nabla_{\bar k}\phi) u_{\bar q} \overline{u_{\bar
        p}}. \]
  Let us write $B_r = B(p,r)$ for any $ r\leq 2$. Let $\chi$ be a
  cutoff function supported in $B_2$, with $\chi=1$ on $B_{3/2}$, and
  $|\nabla\chi| + |\Delta\chi| < C$. Then we have (each
  integral being on $B_2$)
  \[ \label{eq:2} \int -\chi^2\Delta|u|^2 &\leq -\int \chi^2 |\nabla u|^2 + \int
    \chi^2 |u|^2 - 2\int \chi^2 g^{j\bar k} g^{p\bar q}
    (\nabla_j\nabla_{\bar q} \phi) u_{\bar k}\overline{u_{\bar p}} \\
    &\quad + \int \chi^2 g^{j\bar k} g^{p\bar q}
    (\nabla_j\nabla_{\bar k} \phi) u_{\bar q}\overline{u_{\bar p}} \\
    &= - \int\chi^2 |\nabla u|^2 + \int \chi^2 |u|^2 + 2\int 2\chi
    g^{j\bar k} g^{p\bar q} \nabla_j\chi\nabla_{\bar q}\phi\, u_{\bar
      k}\overline{u_{\bar p}} \\
    &\quad + 2\int \chi^2 g^{j\bar k} g^{p\bar q} \nabla_{\bar q}\phi
    \big( \overline{u_{\bar p}}\nabla_j u_{\bar k} + u_{\bar k}
    \overline{\nabla_{\bar j} u_{\bar p}}\big) \\
    &\quad - \int 2\chi
    g^{j\bar k} g^{p\bar q} \nabla_j\chi\nabla_{\bar k}\phi\, u_{\bar
      q}\overline{u_{\bar p}} \\
    &\quad - \int \chi^2 g^{j\bar k} g^{p\bar q} \nabla_{\bar k}\phi
    \big( \overline{u_{\bar p}}\nabla_j u_{\bar q} + u_{\bar q}
    \overline{\nabla_{\bar j} u_{\bar p}}\big) \\
    &\leq -\frac{1}{2} \int \chi^2|\nabla u|^2 + C\int |u|^2, \]
  for a constant $C$ depending on the dimension, $\nu$ and
  $\sup_{B(p,2)}|\nabla\phi|$. At the same time
  \[ \int \chi^2 \Delta |u|^2 = \int \Delta(\chi^2) |u|^2 \leq C \int
    |u|^2, \]
  and combining this with our previous inequality, we get
  \[ \label{eq:nablauC} \int \chi^2 |\nabla u|^2 < C \int |u|^2. \]

  Under our assumptions we have a bound for the Sobolev constant, and so
  \[ \left(\int |\chi |u||^{\frac{2n}{n-2}}\right)^{\frac{n-2}{n}}
    &\leq \int \left|\nabla(\chi|u|)\right|^2 \\
    &= \int |\nabla\chi|^2 |u|^2 + \int \chi^2 \big|\nabla |u|\big|^2
    + 2\chi |u| \nabla\chi.\nabla|u| \\
    &\leq C \int|u|^2,\]
  using the Cauchy-Schwarz inequality and also \eqref{eq:nablauC}.
  It follows that
  \[ \label{eq:step1}
     \Vert u\Vert_{L^{\frac{2n}{n-2}}(B_{3/2})} \leq C \Vert
     u\Vert_{L^2(B_2)}. \]

   To estimate higher $L^p$ norms, let $p \geq \frac{2n}{n-2}$, and
   let $\chi$ be a smooth function compactly supported in $B_2$. We
   can compute, using the Sobolev inequality
   \[ \label{eq:1} \left( \int \left| \chi|u|^{\frac{p}{2}}\right|^{\frac{2n}{n-2}}
     \right)^{\frac{n-2}{n}} &\leq C\int \left| \nabla (\chi
       |u|^{\frac{p}{2}})\right|^2 \\
     &= C\int |\nabla\chi|^2|u|^p + \chi^2 \left|\nabla
       |u|^{\frac{p}{2}}\right|^2 + 2\chi |u|^{\frac{p}{2}}
     \nabla\chi.\nabla |u|^{\frac{p}{2}}. \]
   We have
   \[ \int \chi^2 \left|\nabla
       |u|^{\frac{p}{2}}\right|^2 &= \int \frac{p^2}{8(p-2)} \chi^2 \nabla
     |u|^{p-2}. \nabla |u|^2 \\
   &= -\int \frac{p^2}{4(p-2)} \chi |u|^{p-2} \nabla\chi.\nabla |u|^2
   - \int \frac{p^2}{8(p-2)} \chi^2 |u|^{p-2} \Delta |u|^2, \]
 and
 \[ \int 2\chi |u|^{\frac{p}{2}}
     \nabla\chi.\nabla |u|^{\frac{p}{2}} &= \int \frac{p}{4} |u|^{p-2}
     \nabla \chi^2. \nabla |u|^2 \\
     &= -\int \frac{p(p-2)}{8} \chi^2|u|^{p-4} \nabla
     |u|^2.\nabla |u|^2 - \int \frac{p}{4} \chi^2 |u|^{p-2} \Delta
     |u|^2 \\
     &\leq  - \int \frac{p}{4} \chi^2 |u|^{p-2} \Delta
     |u|^2. \]
   
   Combining these with \eqref{eq:1} we get
   \[ \label{eq:3} \left( \int \left| \chi|u|^{\frac{p}{2}}\right|^{\frac{2n}{n-2}}
     \right)^{\frac{n-2}{n}} &\leq C\int |\nabla\chi|^2 |u|^p -
     \frac{p^2}{2(p-2)}\chi |u|^{p-1} \nabla\chi.\nabla |u|
     \\ &\qquad\qquad 
     -\left(\frac{p^2}{8(p-2)} + \frac{p}{4}\right) \chi^2
     |u|^{p-2}\Delta |u|^2 \]
   Using \eqref{eq:Bochner1} we have, similarly to \eqref{eq:2}
   \[ \label{eq:4} -\int \chi^2 |u|^{p-2} \Delta |u|^2 \leq - \frac{1}{2} \int \chi^2 |u|^{p-2}
     |\nabla u|^2 + Cp^2\int (\chi^2 + |\nabla\chi|^2) |u|^p, \]
   where $C$ depends on $\sup |\nabla\phi|$.
   Since $p\geq \frac{2n}{n-2}$, we have some constnat $c_n > 0$ such that
   \[ c_np < \frac{p^2}{2(p-2)} < c_n^{-1} p, \quad  c_np <
     \left(\frac{p^2}{8(p-2)} + \frac{p}{4}\right) < c_n^{-1}p. \]
   It then follows, combining \eqref{eq:3} and \eqref{eq:4}, and using
   the Cauchy-Schwarz inequality, that
   \[ \label{eq:5} \left( \int \left| \chi|u|^{\frac{p}{2}}\right|^{\frac{2n}{n-2}}
     \right)^{\frac{n-2}{n}} &\leq Cp^3 \int (\chi^2 + |\nabla\chi|^2)
     |u|^p. \]
   We now choose a sequence of cutoff functions $\chi_i$, such that
   $\chi_i$ is supported in $B_{1+2^{-i}}$, and $\chi_i=1$ on
   $B_{1+2^{-i-1}}$. We can arrange that $|\nabla\chi_i| < C
   2^{i}$. We also define $\gamma = \frac{n}{n-2}$. Equation
   \eqref{eq:5}, setting $\chi=\chi_i$ and $p=2\gamma^i$, implies that
   \[ \left(\int \chi_i^{2\gamma} |u|^{2\gamma^{i+1}}
       \right)^{\frac{1}{2\gamma^{i+1}}} \leq
       (8C\gamma^{3i} 2^{2i})^{\frac{1}{2\gamma^i}} \left( \int_{B_{1+2^{-i}}}
         |u|^{2\gamma^i}\right)^{\frac{1}{2\gamma^i}}. \]
     From this we have
     \[  \Vert u\Vert_{L^{2\gamma^{i+1}}(B_{1+2^{-i-1}})} \leq
       C^{\frac{3i}{\gamma^i}} \Vert
       u\Vert_{L^{2\gamma^i}(B_{1+2^{-i}})}. \]
     Iterating this, we find that
     \[ \sup_{B_1} |u| \leq C \Vert u\Vert_{L^{2\gamma}(B_{3/2})} \leq
       C' \Vert u\Vert_{L^2(B_1)}, \]
     using \eqref{eq:step1}. 
   \end{proof}

We will use Proposition~\ref{prop:L2existence} to construct
holomorphic functions on the tangent cone $X$. We will need the
following result, which implies that the functions that we construct
are actually harmonic across the singular set, and in particular
satisfy local $L^\infty$ and Lipschitz estimates. Note that in the
setting of Section~\ref{sec:Ricflat} with two-sided Ricci bounds, this
step is much more straightforward since the singular set has
codimension four.  
\begin{prop}\label{prop:harmonic}
  Let $f : X\to\mathbf{C}$ be a function such that $\dbar f=0$ on
  $\mathcal{R}_\epsilon$, and $f$ has polynomial growth in the sense
  that for some $k, D > 0$ we have
\[\label{eq:fpoly} \int_{B(o,R)} |f|^2 < DR^k, \]
  for all $R > 0$. Then $f$ is harmonic on $X$. In particular we have
  local estimates
  \[  \sup_{y\in B(x,1/2)} |f(y)| + \sup_{y,z\in B(x,1/2)}
    \frac{|f(y)-f(z)|}{d(y,z)} \leq C\left(\int_{B(x,1)}
      |f|^2\right)^{1/2}, \]
  for a constant $C$ depending only on the dimension and the
  non-collapsing constant $\nu$. In addition $f$ has polynomial growth
  in the pointwise sense that $|f(x)| \leq C(1 + d(o,x))^d$ for some
  $C, d > 0$ depending on $k,D$. 
\end{prop}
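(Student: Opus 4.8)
The plan is to show that $f$ is weakly harmonic on all of $X$, and then appeal to the standard De Giorgi--Nash--Moser theory (valid on $X$ thanks to the Ricci lower bound, non-collapsing, and the resulting Poincar\'e and doubling/Sobolev inequalities, exactly as in Lemma~\ref{lem:Moser}) to obtain the pointwise $L^\infty$ and Lipschitz estimates. Since $\bar\partial f = 0$ on the complex manifold $\mathcal{R}_\epsilon$, the function $f$ is pluriharmonic, hence harmonic, on $\mathcal{R}_\epsilon$; so the only issue is to cross the singular set $X\setminus\mathcal{R}_\epsilon$. The key structural input is that by \cite{CJN}, $X\setminus\mathcal{R}_\epsilon$ is $(2n-2)$-rectifiable with locally finite $(2n-2)$-dimensional Minkowski content. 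This is precisely what is needed to build good cutoff functions: for each $\delta>0$ there is a cutoff $\chi_\delta$ which vanishes on a $\delta$-neighborhood of the singular set, equals $1$ outside a $2\delta$-neighborhood, with $\int_K|\nabla\chi_\delta|^2\,d\mu \to 0$ as $\delta\to 0$ on compact sets $K$ (the finite Minkowski content controls the volume of the annular region by $C\delta^2$, and $|\nabla\chi_\delta|\lesssim\delta^{-1}$ there, so the integral is $O(\delta^{0})$ — in fact one uses the content bound to get it $\to 0$, as in the codimension-$2$ cutoff estimates used in the proof of Lemma~\ref{lem:subharmonic}).

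The main steps are as follows. First I would establish that $f\in L^2_{loc}(X,d\mu)$ with respect to the ambient measure — this is immediate from the polynomial growth hypothesis \eqref{eq:fpoly} — and that $f$ lies in the local Sobolev space $W^{1,2}_{loc}$. For the latter, run the Caccioppoli/reverse-Poincar\'e argument on $\mathcal{R}_\epsilon$: multiply the equation $\Delta f=0$ (valid on $\mathcal{R}_\epsilon$) by $\chi_\delta^2\varphi^2 f$ for a smooth ambient cutoff $\varphi$, integrate by parts on $\mathcal{R}_\epsilon$ — legitimate since $\chi_\delta$ is supported away from the singular set — and use $|\nabla\chi_\delta|_{L^2}\to 0$ together with the already-known local boundedness of $f$ on $\mathcal{R}_\epsilon$ (interior estimates on the complex manifold) to conclude $\int_K|\nabla f|^2\,d\mu <\infty$. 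Then, second, for any test function $\psi\in C_c^\infty(X)$ write $\int_X \nabla f\cdot\nabla(\chi_\delta\psi)\,d\mu = 0$ (this holds because $\chi_\delta\psi$ is an admissible test function compactly supported in $\mathcal{R}_\epsilon$ and $f$ is harmonic there), expand to $\int_X \chi_\delta\,\nabla f\cdot\nabla\psi\,d\mu + \int_X \psi\,\nabla f\cdot\nabla\chi_\delta\,d\mu = 0$, and let $\delta\to 0$: the first term converges to $\int_X\nabla f\cdot\nabla\psi\,d\mu$ by dominated convergence (using $\nabla f\in L^2_{loc}$), and the second is bounded by $\|\psi\|_\infty\|\nabla f\|_{L^2(\mathrm{supp}\,\psi)}\|\nabla\chi_\delta\|_{L^2(\mathrm{supp}\,\psi)}\to 0$. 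Hence $\int_X\nabla f\cdot\nabla\psi\,d\mu=0$ for all $\psi\in C_c^\infty(X)$, i.e. $f$ is weakly harmonic on $X$. Third, invoke the standard Moser iteration on $X$ — available under $\mathrm{Ric}\ge -$ small and non-collapsing via the uniform Sobolev inequality — to get the interior $\sup$-bound, and the De Giorgi--Nash estimate (or the gradient estimate, since $X$ is a limit of manifolds with Ricci bounded below, though for a fixed cone we can simply cite Cheeger--Colding or use the heat-kernel/Moser argument) to get the Lipschitz bound. Finally, the pointwise polynomial growth $|f(x)|\le C(1+d(o,x))^d$ follows by applying the $\sup$-bound on balls $B(x,1)$ and controlling $\int_{B(x,1)}|f|^2 \le \int_{B(o,d(o,x)+1)}|f|^2 < D(d(o,x)+1)^k$ by \eqref{eq:fpoly}, so one may take $d = k/2$.

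The main obstacle is the first step: showing $\nabla f\in L^2_{loc}(X,d\mu)$, i.e. that no Sobolev energy is concentrated on the singular set. This is exactly the place where codimension $\ge 2$ (here $2n-2$, but the relevant bound is codimension two for the Minkowski content of $X\setminus\mathcal{R}_\epsilon$) is used, and where one must know a priori that $f$ does not blow up near the singular set — which in turn requires the interior estimates on the complex manifold $\mathcal{R}_\epsilon$ from \cite{LiuSz} and the Minkowski-content cutoff. Once $f\in W^{1,2}_{loc}(X)$ and weakly harmonic, everything else is the standard elliptic regularity package on spaces with a lower Ricci bound and non-collapsing, and requires no new ideas beyond those already deployed in the proofs of Lemmas~\ref{lem:subharmonic} and \ref{lem:Moser}.
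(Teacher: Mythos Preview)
Your proposal has a genuine gap in the first step, precisely where you yourself flag the main obstacle. To run the Caccioppoli argument and obtain $\nabla f\in L^2_{loc}$ you must control $\int |\nabla\chi_\delta|^2\,|f|^2$, and for this you invoke ``the already-known local boundedness of $f$ on $\mathcal{R}_\epsilon$''. But interior estimates for holomorphic functions on $\mathcal{R}_\epsilon$ only bound $|f|$ on balls of radius comparable to the distance to $X\setminus\mathcal{R}_\epsilon$; as you approach the singular set these bounds degenerate, and nothing rules out $|f|$ blowing up there. In codimension two this is fatal: even with a logarithmic cutoff giving $\int|\nabla\chi_\delta|^2\to 0$, the weighted integral $\int|\nabla\chi_\delta|^2|f|^2$ need not stay bounded when $|f|$ is unbounded on the support of $\nabla\chi_\delta$, and knowing only $f\in L^2$ does not suffice. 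The local boundedness you are assuming is exactly the conclusion of the proposition. Your reference to Lemma~\ref{lem:subharmonic} is also misplaced: that lemma is proved under a codimension-four hypothesis (its cutoffs need $\Delta\chi_\epsilon\in L^2$), and the paper notes just before its statement that the estimate fails in codimension two.

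The paper circumvents this circularity by an entirely different mechanism. Rather than work on $X$, it transfers the problem to the smooth approximating balls $B(p_i,R)$ and runs the heat flow there. The point is that the time-derivative of the approximate evolution $f_{i,R,t}(q)$ is controlled by $\int|\dbar(\eta_i f_i)|\cdot|\nabla H_i|$, and via Cauchy--Schwarz the dangerous term is bounded by $\Vert\nabla\eta\Vert_{L^2}\Vert f\Vert_{L^2}$ --- so only the $L^2$ norm of $f$ enters, never a pointwise bound near the singular set. Choosing $\eta$ with $\int_{B(o,R)}|\nabla\eta|^2<e^{-R}$ (possible in codimension two) then beats the polynomial bound $\int_{B(o,R)}|f|^2<DR^k$, and one concludes $f_T(q)=f_1(q)$ for all $T\in(0,1)$, i.e.\ $f$ is fixed by the heat flow. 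The $L^\infty$ and Lipschitz estimates then follow from the standard theory for harmonic functions on Ricci limit spaces. The essential difference is that the heat-kernel approach needs $\int|\nabla\eta|\,|f|$ small, which Cauchy--Schwarz reduces to $L^2$ data, whereas your Caccioppoli approach needs $\int|\nabla\chi|^2|f|^2$ small, which it does not.
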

\begin{rem}
  In the statement we require the global growth condition
  \eqref{eq:fpoly} since we use the heat flow to smooth out $f$. The
  argument could be localized by working with Greens functions on
  balls instead, but we will not need this. 
\end{rem}
\begin{proof}
  Let $H(x,y,t)$ be the heat kernel on $X$. We let $f_t$ be the
  evolution of $f$ under the heat flow: 
  \[ f_t(x) = \int f(y) H(x,y,t) d\mu(y). \]
  Note that the polynomial growth assumption on $f$ together with the
  exponential decay of $H(x,y,t)$ ensures that this is well
  defined. Our goal is to show that $f_t=f$ for all $t >0$, which
  implies that $f$ is harmonic. We will use results about the
  convergence of heat kernels under Gromov-Hausdorff convergence due
  to Ding~\cite{Ding}, and argue by approximating $f_t$ with
  corresponding flows on smooth spaces. 

  We fix  $T\in(0,1)$, and $q\in\mathcal{R}_\epsilon\cap B(o,1)$. We will show
  that $f_T(q) = f_1(q)$. Let $R > 0$ be large, and set $\delta = e^{-R}$. The
  balls $B(p_i, R)$ converge to $B(o,R)\subset X$ in the
  Gromov-Hausdorff sense. By the results of Ding, the heat kernels
  $H_i(x,y,t)$ on $B(p_i,R)$ (with Dirichlet boundary conditions)
  satisfy
  \[ |H_i(x,y,t) - H(x,y,t)| < \Psi(i^{-1}, R^{-1}), \]
   for $t=T,1$ and $x,y\in B(o,R)$, where we use Gromov-Hausdorff approximations to
   identify points in $B(p_i,R)$ and $B(o, R)$. 
   
   Using that $X\setminus \mathcal{R}_\epsilon$ has locally finite
   codimension two Minkowki measure, we can find a cutoff function
   $\eta$ with compact support in $\mathcal{R}_\epsilon$, such that
   $\eta(q)=1$, and $\int_{B(o,R)} |\nabla\eta|^2 < \delta$. In
   addition we can ensure that $\int_{B(o,R)} (1-\eta)^2 < \delta$. We
   have corresponding cutoff functions $\eta_i$ on $B(p_i,R)$
   converging to $\eta$, and satisfying the same estimates. 

   Using the local holomorphic charts, we can find functions $f_i$ on $B(p_i,R)$,
   converging uniformly to $f$ on the support of $\eta$, such that
   $|\dbar f_i| < \Psi(i^{-1})$ on $\mathrm{supp}(\eta_i)$. In
   particular this implies that 
   \[ \int_{B(p_i, R)} |\dbar(\eta_i f_i)| < \Psi(i^{-1}, R^{-1}). \]
   Let $\chi_i$ denote cutoff functions supported in $B(p_i,R)$, and
   equal to 1 on $B(p_i, R-1)$, such that $|\nabla\chi_i| < C$ for a
   uniform constant $C$. Consider the heat flow for the functions
   $\chi_i\eta_i f_i$. We let
   \[ f_{i,R,t}(q) = \int_{B(p_i, R)} \chi_i(y)\eta_i(y) f_i(y)
     H_i(q,y,t)\,d\mu_i(y). \]
   We have
   \[ \frac{d}{dt} f_{i,R,t}(q) &= \int_{B(p_i,R)} \chi_i\eta_i f_i
     g_i^{a\bar b} \bar{\partial}_b \partial_a H_i(q,y,t)\, d\mu_i(y)
     \\
     &=  -\int_{B(p_i,R)} (\bar{\partial}_b\chi_i)\eta_i f_i
     g_i^{a\bar b} \partial_a H_i(q,y,t)\, d\mu_i(y) \\
&\quad - \int_{B(p_i,R)} \chi_i \bar{\partial}_b(\eta_i f_i)
     g_i^{a\bar b} \partial_a H_i(q,y,t)\, d\mu_i(y).
     \]
    On the support of $\nabla\chi_i$ we have the estimate $|\nabla_y
    H_i(q,y,t)| < C_T e^{-cR^2}$ for $t\in[T,1]$ for some $c > 0$ and
    $C_T > 0$ depending on $T$, while on $B(p_i,R)$ we have $|\nabla_y
    H(q,y,t)| < C_T$, for $t\in [T,1]$. It follows that
    \[ \left|\frac{d}{dt} f_{i,R,t}(q)\right| < C_T \Psi(i^{-1},
      R^{-1}), \]
    for $t\in [T,1]$ and so 
    \[ \label{eq:b-1}|f_{i,R,1}(q) - f_{i,R,T}(q)| < C_T \Psi(i^{-1}, R^{-1}). \]
    The convergence of $\eta_i f_i$ to $\eta f$, the convergence
    of the heat kernels as $i\to \infty$, and their exponential decay imply that
    \[ \label{eq:b2} |f_{i,R,t}(q) - f_t(q)| <  \Psi(i^{-1}|R), \]
    for $t=T, 1$. Choosing $R$ sufficiently large (depending
    on $T$) and $i$ large (depending on $T,R$), from \eqref{eq:b-1},
    \eqref{eq:b2} we see that $|f_1(q) - f_T(q)|$ can be made
    arbitrarily small, and therefore $f_1(q)=f_T(q)$.  
\end{proof}

\section{Tangent cones are affine varieties}\label{sec:affine}
As before, we suppose that $X$ is a tangent cone at a point of a
non-collapsed limit of K\"ahler manifolds with Ricci curvature bounded
below. Our main goal in this section is to prove that $X$ is
homeomorphic to an affine variety. Given the $L^2$ existence result
Proposition~\ref{prop:L2existence}, or Proposition~\ref{prop:L21} when
we have two-sided Ricci bounds, we can more or less follow the
argument in Donaldson-Sun~\cite{DS1,DS2} with suitable modifications as
in our previous work~\cite{LiuSz}, where only a lower bound
for the Ricci curvature is assumed. 

Recall that we have a subset $\mathcal{R}_\epsilon\subset X$ which has
the structure of a complex manifold, and whose complement has locally
finite codimension-two Minkowski content. Our first goal is to show that $X$ is
locally homeomorphic to a complex variety, by constructing holomorphic
functions that embed a neighborhood of the vertex $o\in X$ into
$\mathbf{C}^N$. The basic ingredient for this is the following. 

\begin{prop}\label{prop:separate}
 We can use holomorphic functions of polynomial growth on
  $X$ to separate points, and also to separate tangents at 
  points in $\mathcal{R}_\epsilon$. More precisely:
  \begin{enumerate}
\item Let $x_1, x_2\in X$. There exists a holomorphic function $f$ of
  polynomial growth on $X$ such that $f(x_1)\ne f(x_2)$. 
\item Let $x\in \mathcal{R}_\epsilon$. There are holomorphic functions
  $f_1,\ldots, f_n$ of polynomial growth on $X$ which define an
  embedding of a neighborhood of $x$ into $\mathbf{C}^n$. 
\end{enumerate}
\end{prop}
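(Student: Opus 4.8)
The plan is to construct the required holomorphic functions by solving a $\dbar$-equation on $\mathcal{R}_\epsilon$ with the weighted $L^2$-estimate of Proposition~\ref{prop:L2existence}, using weights carrying logarithmic poles (Remark~\ref{rm1}) to prescribe the values and $1$-jets of the solutions, and then promoting the resulting $\dbar$-closed functions to genuine harmonic functions on $X$ via Proposition~\ref{prop:harmonic}, which also supplies the uniform local $L^\infty$ and Lipschitz bounds. The basic building block is the following. Given $q\in\mathcal{R}_\epsilon$ and a holomorphic germ $g$ near $q$, fix a cutoff $\beta$ supported in a small holomorphic coordinate ball about $q$ with $\beta\equiv 1$ near $q$, and set $h=\beta g$, so that $\alpha=\dbar h=g\,\dbar\beta$ is smooth, compactly supported in $\mathcal{R}_\epsilon$, and vanishes near $q$. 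Choosing the weight $\phi=w(r^2)+\gamma\log|z|^2$ in a holomorphic chart centered at $q$, with $w(t)=C\log(1+t)$ and $\gamma\geq n+1$ (the logarithmic pole at $q$ being permitted by Remark~\ref{rm1}), the right-hand side of the estimate in Proposition~\ref{prop:L2existence} is finite because $\alpha$ is supported away from $q$; we thus obtain $f\in L^2(e^{-\phi}\,d\mu)$ with $\dbar f=\alpha$. Then $F:=h-f$ satisfies $\dbar F=0$ on $\mathcal{R}_\epsilon$; since $e^{-\phi}$ decays like $(1+r^2)^{-C}$ at infinity and $h$ has compact support, $F$ has polynomial $L^2$-growth, so by Proposition~\ref{prop:harmonic} it is harmonic on $X$, with the attendant local $L^\infty$, Lipschitz and pointwise polynomial-growth bounds; and near $q$ the difference $F-g=-f$ is holomorphic and square-integrable against the singular weight $|z|^{-2\gamma}$, which forces it to vanish at $q$ to second order. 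In particular $F(q)=g(q)$ and $dF(q)=dg(q)$.

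Part (2) then follows at once: for $x\in\mathcal{R}_\epsilon$ take holomorphic coordinates $z_1,\dots,z_n$ centered at $x$ and apply the construction with $g=z_j$ for $j=1,\dots,n$ (and a logarithmic pole at $x$). This yields holomorphic functions $F_1,\dots,F_n$ of polynomial growth with $F_j(x)=0$ and $dF_j(x)=dz_j(x)$, so that $dF_1(x),\dots,dF_n(x)$ are linearly independent; as $\mathcal{R}_\epsilon$ is an $n$-dimensional complex manifold and the $F_j$ are holomorphic on it, $(F_1,\dots,F_n)$ is a local biholomorphism near $x$ by the inverse function theorem, hence embeds a neighborhood of $x$ into $\mathbf{C}^n$. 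Part (1) when $x_1,x_2\in\mathcal{R}_\epsilon$ is equally immediate: choose $g$ to be the germ $\equiv 1$ near $x_1$ and $\equiv 0$ near $x_2$, supported in disjoint coordinate balls, and place logarithmic poles at both points; the resulting $F$ has $F(x_1)=1\ne 0=F(x_2)$.

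The remaining case of part (1), where one or both of $x_1,x_2$ lie in the singular set $X\setminus\mathcal{R}_\epsilon$, is where the real work lies, since a logarithmic pole cannot be placed at a singular point. Here I would argue by approximation. Pick regular points $x_1^{(m)}\to x_1$ and $x_2^{(m)}\to x_2$ in $\mathcal{R}_\epsilon$, and for each $m$ run the construction above, with $g\equiv 1$ near $x_1^{(m)}$ and $g\equiv 0$ near $x_2^{(m)}$ and logarithmic poles at both, to obtain a holomorphic function $F_m$ of polynomial growth with $F_m(x_1^{(m)})=1$ and $F_m(x_2^{(m)})=0$. The crucial point is that, since $x_1\ne x_2$, the supporting coordinate balls about $x_1^{(m)}$ and $x_2^{(m)}$ stay a definite distance apart, and one can choose the cutoffs, the germs and the weights $\phi_m$ (a fixed $w$, plus a uniformly bounded logarithmic-pole contribution) so that the right-hand side of the estimate in Proposition~\ref{prop:L2existence}, and hence the polynomial-growth constants of $F_m$, are uniform in $m$. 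Then, by the uniform local Lipschitz and $L^\infty$ bounds and the uniform pointwise polynomial-growth bound of Proposition~\ref{prop:harmonic}, the family $\{F_m\}$ is locally equi-bounded and equi-Lipschitz; passing to a subsequence, $F_m\to F$ locally uniformly on $X$, where $F$ is holomorphic on $\mathcal{R}_\epsilon$ (a locally uniform limit of holomorphic functions) and of polynomial growth, hence harmonic on $X$ by Proposition~\ref{prop:harmonic}. Finally, the uniform Lipschitz bound together with $x_j^{(m)}\to x_j$ gives $F(x_1)=\lim_m F_m(x_1^{(m)})=1$ and $F(x_2)=\lim_m F_m(x_2^{(m)})=0$, so $F$ separates $x_1$ and $x_2$.

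The main obstacle is precisely establishing these uniform estimates as the base points run into the singular set — equivalently, keeping the separation $|F_m(x_1^{(m)})-F_m(x_2^{(m)})|$ bounded below and stable under the limit — which is where the fine structure of $X$ near $S$ (the codimension and locally finite Minkowski content of $X\setminus\mathcal{R}_\epsilon$ that already underlie Proposition~\ref{prop:L2existence}) is used. A minor auxiliary point, handled by standard interior regularity for $\dbar$ on the complex manifold $\mathcal{R}_\epsilon$, is that the weak solution furnished by Proposition~\ref{prop:L2existence} is genuinely holomorphic on the open set where $\alpha$ vanishes, so that $F=h-f$ is holomorphic there and the vanishing-order argument above applies.
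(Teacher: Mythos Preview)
Your treatment of part~(b), and of part~(a) when both $x_1,x_2\in\mathcal{R}_\epsilon$, is correct and standard: the log-pole weight forces the required vanishing of the correction $f$, and Proposition~\ref{prop:harmonic} upgrades $F=h-f$ to a harmonic function of polynomial growth with the stated local bounds. The paper does exactly this for part~(b).

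The genuine gap is in part~(a) when a point is singular. Your approximation scheme cannot be made uniform in the way you suggest. If $x_1^{(m)}\to x_1\in X\setminus\mathcal{R}_\epsilon$, the radius $\rho_m$ of any holomorphic chart at $x_1^{(m)}$ tends to zero. With the cutoff $\beta_m$ supported in $B(x_1^{(m)},\rho_m)$ and a log pole of strength $\gamma\geq n$ (the minimum needed to pin down the value), the form $\alpha_m=\dbar\beta_m$ lives on an annulus of scale $\rho_m$, where $|\alpha_m|\sim\rho_m^{-1}$ and $|z|^{-2\gamma}\sim\rho_m^{-2\gamma}$. The right-hand side of the estimate in Proposition~\ref{prop:L2existence} then scales like $\rho_m^{2n-2-2\gamma}\leq\rho_m^{-2}\to\infty$, so the $L^2$ norm of $F_m$ on any fixed ball, and hence the Lipschitz constants coming from Proposition~\ref{prop:harmonic}, blow up. After any renormalization that restores compactness one loses control of $F_m(x_1^{(m)})$. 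The codimension-two Minkowski bound on the singular set is of no help here: the obstruction is the shrinking chart at $x_1^{(m)}$, not the measure of $S$.

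The paper handles part~(a) by a different mechanism, following Donaldson--Sun. Rather than a log pole, it modifies the weight so that $w(t)=t+1$ for $t$ small and $w(t)=C\log(1+t)$ for $t$ large; then near the vertex $\sqrt{-1}\,\partial\dbar\phi=\omega$, so the trivial bundle with metric $e^{-\phi}$ behaves like an honest positive line bundle there. Given $x_1,x_2$ near the vertex, one passes to a scale at which a neighborhood of $x_1$ is close to a ball in a tangent cone, picks a nearby \emph{regular} point, and transplants a Gaussian-type peak section whose $\dbar$-error is controlled by the exponential decay of the Gaussian rather than by a shrinking cutoff against a diverging pole; this error is uniformly small no matter how singular $x_1$ is. Solving $\dbar$ with Proposition~\ref{prop:L2existence} and applying the sup bound from Proposition~\ref{prop:harmonic} then yields a polynomial-growth holomorphic function with controlled values near $x_1$ and $x_2$, and the cone scaling extends the conclusion from a neighborhood of the vertex to all of $X$. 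In short: for separating points the paper uses peak sections and curvature equal to the metric, not log poles; log poles are reserved for part~(b), where the base point is already regular and no uniformity issue arises.
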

\begin{proof}
  The proof of this closely follows arguments in
  Donaldson-Sun~\cite{DS1} as well as the authors' work
  \cite{LiuSz}, given the $L^2$ existence result
  Proposition~\ref{prop:L2existence} together with
  Proposition~\ref{prop:harmonic}. We can construct holomorphic
  functions of polynomial growth by using a logarithmic weight
  $C\log(1+r^2)$, however in this case the curvature of the
  metric $e^{-\phi}$ is not proportional to the metric on the cone. We can however use a
  slightly different weight function $\phi = w(r^2)$, where $w(t)=t+1$ for $t$
  close to zero, and $w(t) = C\log(1+t)$ for $t$ large, still
  satisfying the inequalities \eqref{w}. Then in a neighborhood of the
  vertex the curvature of $e^{-\phi}$ will be the metric on $X$. Then we can proceed
  as in ~\cite{DS1} or \cite{LiuSz} to finish the proof of $(a)$ at
  least for $x_1,x_2$ close to the vertex. By scaling this can be
  extended to all of $X$. Part
  $(b)$ can be proven in a standard way using weight functions with logarithmic
  poles (see Remark \ref{rm1}).
  \end{proof} 

As in Chen-Donaldson-Sun~\cite[Section 2.5]{CDS2}, or
\cite[Proposition 3.2]{LiuSz}, we can
show that if at a point $x\in X$ there is a tangent cone that splits
off $\mathbf{R}^{2n-2}$, then polynomial growth holomorphic functions
can be used to embed a neighborhood of $x$ into $\mathbf{C}^n$. 

\begin{lem}
A small neighborhood of $o\in X$ is homeomorphic to a normal analytic space.
\end{lem}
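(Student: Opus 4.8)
The plan is to use Proposition~\ref{prop:separate} to produce a finite collection of polynomial growth holomorphic functions $f_1,\ldots,f_N$ on $X$ that simultaneously separate points of a fixed neighborhood $U$ of the vertex $o$ and separate tangents at every point of $U\cap\mathcal{R}_\epsilon$, and to show that the resulting map $F=(f_1,\ldots,f_N)\colon U\to\mathbf{C}^N$ is a homeomorphism onto a normal analytic subvariety of a domain in $\mathbf{C}^N$. First I would use the compactness of $\overline{B(o,\delta)}$ for a small $\delta$, together with part $(a)$ of Proposition~\ref{prop:separate}, to extract finitely many functions separating all pairs of points in $\overline{B(o,\delta)}$; then, using part $(b)$ and the discussion following it (including the case of points whose tangent cone splits off $\mathbf{R}^{2n-2}$), I would enlarge this collection so that near each point of $\mathcal{R}_\epsilon\cap\overline{B(o,\delta)}$ some $n$ of the $f_j$ form a holomorphic coordinate system. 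Because $X\setminus\mathcal{R}_\epsilon$ has locally finite codimension-two Minkowski content, hence in particular (real) codimension at least two and therefore is nowhere dense with no interior, this is enough to pin down the image as a variety.

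The next step is to identify $F(U)$ with an analytic set. On $\mathcal{R}_\epsilon$ the map $F$ is a holomorphic immersion that is injective, so $F(U\cap\mathcal{R}_\epsilon)$ is a locally closed complex submanifold of dimension $n$ in the relevant domain of $\mathbf{C}^N$; I would then take $V$ to be the closure of $F(U\cap\mathcal{R}_\epsilon)$ in a suitable open set. To see that $V$ is a genuine analytic subvariety and that $F(U)=V$, I would invoke the Remmert--Stein type extension theorem: the exceptional set $F(U\setminus\mathcal{R}_\epsilon)$ is contained in a set of Hausdorff dimension at most $2n-2$ (it is the continuous image of the Minkowski-content bound set, and Lipschitz/Hölder control from Proposition~\ref{prop:harmonic} on the $f_j$ keeps the dimension from increasing), which has vanishing $(2n-1)$-dimensional Hausdorff measure, so the pure $n$-dimensional analytic set $F(U\cap\mathcal{R}_\epsilon)$ extends across it to an analytic set $V$. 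One must then check $F(U)\subseteq V$ and $V\subseteq F(U)$: the first because $F$ is continuous and $U\cap\mathcal{R}_\epsilon$ is dense in $U$, the second because any point of $V$ is a limit of $F(x_k)$ with $x_k\in\mathcal{R}_\epsilon$, and by properness of $F$ on $\overline{B(o,\delta)}$ (the $f_j$ being proper after including enough functions, e.g. ones comparable to $r$) the $x_k$ subconverge in $X$ to a point mapping there.

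Having $F\colon U\to V$ a continuous bijection with $U$ (locally) compact, it is a homeomorphism. Normality of $V$ I would get from Oka's criterion or Serre's criterion: $V$ is normal iff it is regular in codimension one and satisfies Serre's condition $S_2$. Regularity in codimension one holds because the singular locus of $V$ is the image of $U\setminus\mathcal{R}_\epsilon$ (plus possibly the branch/ramification locus of $F$, which is empty since $F$ is an immersion on $\mathcal{R}_\epsilon$ and injective), and this image has Hausdorff dimension $\le 2n-2$, i.e. complex codimension $\ge 1$ — and in fact, using that the Minkowski content is of codimension two in the \emph{real} sense gives complex codimension one, but one should upgrade this: by the structure theory (Cheeger--Jiang--Naber, and the splitting result \cite{CCT} used above) the part of the singular set with a $\mathbf{R}^{2n-2}$-splitting tangent is where neighborhoods still embed into $\mathbf{C}^n$, so the truly singular part of $V$ has complex codimension $\ge 2$, which also handles $S_2$ automatically for a hypersurface-free situation; alternatively $S_2$ follows since $V$ is a limit of smooth (hence $S_2$) data, or directly from the fact that bounded holomorphic functions on $V\setminus\mathrm{Sing}(V)$ extend, which is exactly what Proposition~\ref{prop:harmonic} gives (holomorphic functions of polynomial growth extend across $X\setminus\mathcal{R}_\epsilon$ as honest holomorphic functions on the normalization). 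The main obstacle I expect is this last point — rigorously establishing that $V$ is normal, equivalently that every bounded holomorphic function on the smooth locus extends holomorphically across the image of the singular set; the key input is that holomorphic functions of polynomial growth on $X$ are harmonic on all of $X$ (Proposition~\ref{prop:harmonic}) and hence continuous and bounded near the vertex, which lets one realize $X$ near $o$ as the normalization of $V$, and then the identity map from (the analytic space) $X$-near-$o$ to its image is finite, bijective, and biholomorphic on the regular locus, forcing $V$ to be normal with $F$ a homeomorphism.
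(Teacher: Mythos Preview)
Your overall strategy---build an injective holomorphic map $F$ from a neighborhood of $o$ into $\mathbf{C}^N$ using Proposition~\ref{prop:separate}, identify the image as an analytic set, then argue normality---matches the paper's. The substantive difference is in how you show the image is analytic. The paper does not use a Remmert--Stein/Shiffman extension argument; instead it verifies (using cutoff functions along the singular set) that the pushforward under $F$ of the current of integration is a $d$-closed positive locally rectifiable $(N-n,N-n)$-current near the origin, and then invokes King's theorem \cite{King} (equivalently \cite{HP}) to conclude that the image is an analytic variety of dimension $n$. Your Shiffman-type route can be made rigorous---$F$ is locally Lipschitz by Proposition~\ref{prop:harmonic}, so the image of $X\setminus\mathcal{R}_\epsilon$ has vanishing $\mathcal{H}^{2n-1}$-measure, and once properness is arranged $F(\mathcal{R}_\epsilon\cap U)$ is a closed pure $n$-dimensional submanifold of the complement---but the current-theoretic approach packages the boundary and closedness bookkeeping into a single citation.

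Your normality argument, however, has a genuine gap. The claim that $S_2$ ``follows since $V$ is a limit of smooth (hence $S_2$) data'' has no content in this setting. The alternative via extension of bounded holomorphic functions does not follow from Proposition~\ref{prop:harmonic} as you suggest: that proposition controls \emph{globally defined polynomial-growth} holomorphic functions on $X$, not arbitrary bounded holomorphic functions on a local open subset of $V_{\mathrm{reg}}$, so it does not supply the Riemann/Hartogs-type extension needed to verify normality directly. The paper instead cites \cite[Propositions~2.3, 2.4]{DS2}: after adding further polynomial-growth holomorphic functions one shows the map is a local homeomorphism onto a normal variety, in effect replacing $V$ by its normalization. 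This is exactly the idea in your final sentence, and it is the argument that must actually be executed; the $R_1+S_2$ verification you attempt first does not close without it.
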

\begin{proof}
The argument is very similar to \cite{DS2}. By using the separation of
points, we can find $N$ polynomial growth holomorphic functions $f_1,
.., f_N$ which all vanish at $o$ so that $\sum\limits_{j=1}^N
|f_j|^2\geq c>0 $ on $\partial B(o, 1)$. One can verify, by using
cutoff functions along the singular set,
 that the image of $(f_1, .., f_N)$ near the origin of $\mathbb{C}^N$
 defines a $d$-closed positive locally rectifiable current of type
 $(N-n, N-n)$. According to the main theorem in \cite{King} (or
 equivalently, Theorem $1.3(a')$ of \cite{HP}), 
 the image of $(f_1, .., f_N)$ defines a complex analytic variety of
 dimension $n$ near the origin in $\mathbb{C}^N$. By adding more
 holomorphic functions, we can ensure as in \cite[Propositions 2.3,
 2.4]{DS2}  that this map
 is a homeomorphism locally, and the image is a normal variety. 
\end{proof}
\begin{rem}
Note that unlike in \cite{DS1, DS2} or \cite{LiuSz}, the holomorphic
functions that we construct on $X$ do not necessarily arise as  limits
of  sequences of holomorphic functions on the approximating manifolds.  
\end{rem}

In order to show that $X$ is homeomorphic to a normal affine variety,
we can use the method in Donaldson-Sun~\cite{DS2}, which in turn is
based on an idea of Van Coevering~\cite{VC} in the setting of
K\"ahler cones with smooth links. The main point is to
decompose polynomial growth holomorphic functions into  sums of
homogeneous holomorphic functions under the homothetic vector field
$r\partial_r$.  
Recall that by \cite[Theorem 2]{Liu1} there is a one parameter group
$\sigma_t$ of isometries acting on the tangent cone $X$, preserving the
distance function from the vertex $o\in X$. Note that since it acts
by isometries, the action of $\sigma_t$ preserves
$\mathcal{R}_\epsilon$. We need the following. 
\begin{prop}
  The group $\sigma_t$ acts by biholomorphisms on
  $\mathcal{R}_\epsilon$. In addition the vector field $v$ on
  $\mathcal{R}_\epsilon$ generating this action satisfies $v = J
  r\partial_r$. 
\end{prop}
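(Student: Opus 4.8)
The plan is to identify the Killing vector field $v$ generating $\sigma_t$ with $Jr\partial_r$ by exploiting the rigidity of the metric cone structure together with the fact that $\sigma_t$ preserves both the metric and the distance function $r$. First I would recall that on the regular set $\mathcal{R} = \mathcal{R}_\epsilon$ we have the Ricci-flat (in the two-sided case) or at least Kähler metric $\omega = \tfrac12\ddb r^2$, so the cone radial field $r\partial_r$ is real holomorphic and $Jr\partial_r$ is a Killing field whose flow acts by the standard Reeb-type rotation. Since $\sigma_t$ preserves $r$, the field $v$ is tangent to the level sets $\{r = \mathrm{const}\}$, hence $dr(v) = 0$. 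The key step is then to show that $v$ is of type $(1,0)+(0,1)$ in the appropriate sense — i.e.\ that $\mathcal{L}_v J = 0$ — and that $v = Jr\partial_r$; I would argue this by passing to a local holomorphic chart near a point $x\in\mathcal{R}$, where $\sigma_t$ is a flow preserving a real-analytic metric that is asymptotically Kähler, and using the uniqueness of one-parameter isometry groups together with the structure $\omega = \tfrac12\ddb r^2$.

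More concretely, here is the order of steps I would carry out. (i) Using Proposition~\ref{prop:harmonic} and the separation results, realize a neighborhood of any $x\in\mathcal{R}$ as embedded in $\mathbf{C}^n$ by polynomial-growth holomorphic functions $f_1,\dots,f_n$; since $\sigma_t$ is an isometry fixing $o$ and preserving $r$, and since the space of polynomial-growth holomorphic functions of each degree is finite-dimensional (this is where the homogeneous decomposition is needed), $\sigma_t$ acts on this finite-dimensional space, and one checks the action is by a one-parameter group of unitary (hence, after complexification, diagonalizable) transformations. This forces $\sigma_t$ to act holomorphically on the chart, i.e.\ $\mathcal{L}_v J = 0$ on $\mathcal{R}$. (ii) Knowing $v$ is real holomorphic and Killing with $dr(v)=0$, contract the Kähler form: $\iota_v\omega = \iota_v(\tfrac12\ddb r^2) = -\tfrac12 d(dr^2(Jv))$ is exact, and one computes $\iota_{Jr\partial_r}\omega = \tfrac12 d(r^2)$. (iii) Show $v = Jr\partial_r$ by showing the difference $v - Jr\partial_r$ is a holomorphic Killing field that is $\omega$-orthogonal to the radial direction and annihilates $r$ and vanishes to the right order at $o$; then use that $\sigma_t$ is canonically attached to the cone structure (from \cite{Liu1}, it is the one generated by considering $X$ as a limit under rescaling and extracting the rotation in the $\mathbf{C}^k$-factor plus the intrinsic piece) to conclude the difference is zero. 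Alternatively, the cleanest route: $Jr\partial_r$ is \emph{defined} to be the field dual to $d(r^2/2)$ via $\omega$, i.e.\ the Hamiltonian field of $r^2/2$; then (ii) says $v$ has the same Hamiltonian, so $v = Jr\partial_r$ on $\mathcal{R}$.

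I expect the main obstacle to be step (i): showing that the isometric action $\sigma_t$ is actually \emph{holomorphic} on $\mathcal{R}_\epsilon$, rather than merely metric-preserving. The metric on $\mathcal{R}_\epsilon$ need not be smooth, so one cannot directly invoke smooth-category rigidity of isometry groups; instead one must leverage the complex-analytic structure coming from the embedding into $\mathbf{C}^N$, and the finiteness of polynomial-growth holomorphic functions of bounded degree, to see $\sigma_t$ acts linearly on a finite-dimensional holomorphic model. One must be careful that $\sigma_t$ genuinely descends to an action on the holomorphic function space — this uses that $\sigma_t$ preserves $r$ (hence preserves degree of homogeneity under $r\partial_r$) and that pullback by an isometry of a harmonic function is harmonic, combined with Proposition~\ref{prop:harmonic} to see that $\dbar$-closedness on $\mathcal{R}_\epsilon$ is preserved (since the $\dbar$-closed polynomial-growth functions are exactly the harmonic ones of polynomial growth that are holomorphic on $\mathcal{R}_\epsilon$, and this class is $\sigma_t$-invariant). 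Once holomorphicity of the action is in hand, the remaining steps are formal cone-geometry computations with $\omega = \tfrac12\ddb r^2$.
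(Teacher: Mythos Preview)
Your approach has a genuine gap in step (i), and it is precisely the obstacle you flagged. You want to show that $\sigma_t$ acts on the finite-dimensional space $\mathcal{H}_d$ of polynomial-growth holomorphic functions, and from this deduce that $\sigma_t$ is holomorphic. But for $\sigma_t$ to act on $\mathcal{H}_d$ you need $\sigma_t^*f$ to be holomorphic whenever $f$ is, and your justification for this is circular: you say the $\dbar$-closed polynomial-growth functions are exactly the harmonic polynomial-growth functions that are holomorphic on $\mathcal{R}_\epsilon$, which is a tautology. Pullback by an isometry does preserve harmonicity, but harmonicity does not imply holomorphicity --- already on flat $\mathbf{C}^n$ there are one-parameter subgroups of $SO(2n)$ that are isometric, fix the origin, preserve $r$, and are not holomorphic. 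So nothing intrinsic to the metric and the distance function singles out the complex structure; you must use how $\sigma_t$ was actually built.

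The paper's proof is entirely different and does exactly this. It goes back to the construction in \cite{Liu1}: on the approximating balls one has Cheeger--Colding potentials $\rho_i$ with $\int |\nabla^2\rho_i - g_i|^2 < \Psi(i^{-1})$, and $\sigma_t$ is defined as the limit of the flows of $v_i = J\nabla\rho_i$. The Hessian estimate gives $\int |\dbar v_i|^2 < \Psi(i^{-1})$, so in each local holomorphic chart on the approximating manifold one can solve $\dbar$ to replace $v_i$ by a genuinely holomorphic vector field $\tilde v_i$ with $\Vert v_i - \tilde v_i\Vert_{L^2}$ small. A Gronwall-type argument on $F(t) = \int |\sigma_{i,t}(x) - \tilde\sigma_{i,t}(x)|^2$ then shows the two flows are $\Psi(i^{-1})$-close, so the limit $\sigma_t$ coincides with the limit of the holomorphic flows $\tilde\sigma_{i,t}$ and is therefore a biholomorphism. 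The identity $v = Jr\partial_r$ is then immediate from the construction, since $v_i = J\nabla\rho_i$ and $\nabla\rho_i$ limits to $r\partial_r$; no Hamiltonian argument or uniqueness-of-Killing-fields step is needed. Your steps (ii)--(iii) would in any case not pin down $v$ uniquely, since a cone can have further holomorphic Killing fields tangent to the level sets of $r$.
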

\begin{proof}
  Let us briefly recall the construction of the isometric action from
  \cite{Liu1}. We consider the geodesic annuli $A_i = B(p_i,
  10)\setminus B(p_i, 1/3)$. From Cheeger-Colding~\cite{CC2} 
  there are smooth functions $\rho_i$ on
  $A_i$ such that
  \[ \label{eq:CC1} \int_{A_i} |\nabla^2 \rho_i - g_i|^2 + |\nabla\rho_i - \nabla
    r_i^2/2|^2 &< \Psi(i^{-1}), \\
    \left| \rho_i - r_i^2/2\right| < \Psi(i^{-1}),  \quad   |\nabla\rho_i| \leq C, \]
  where $r_i$ is the distance function from $p_i$. Define vector
  fields $v_i = J\nabla\rho_i$, and let $\sigma_{i,t}$ be the
  diffeomorphisms generated by $v_i$. It is shown in \cite{Liu1} that
  for small $t$ we can extract a limit $\sigma_t$ as $i\to\infty$,
  which gives rise to a one parameter group of isometries on
  $B(o,6)\setminus B(o,5)$ in $X$. Since the action commutes with the
  homothetic transformations of $X$, it can be extended to all of $X$.  
  
  We will show that for small $t$ the limit $\sigma_t$ is a
  biholomorphism. Let $q\in \mathcal{R}_\epsilon$, such that $q\in
  B(o,6)\setminus B(o,5)$. Let $q_i\in B(p_i, 10)$ such that $q_i\to
  q$. We have a small $\delta > 0$ and holomorphic coordinates
  $z_{ij}$ on $B(q_i,\delta)$ converging to holomorphic coordinates
  $z_j$ on $B(q,\delta)$. 
  
  Abusing notation, we will also denote by $v_i$ the $(1,0)$-part
  of the real vector field defined above. 
  The estimate \eqref{eq:CC1} implies that 
  \[ \label{eq:b4} \int_{A_i} |\dbar v_i|^2 < \Psi(i^{-1}). \]
  Let us write $v_i = v_i^p \partial_{z_{ip}}$ on $B(q_i,\delta)$ in
  terms of the holomorphic coordinates. Since in these coordinates we
  have a lower bound $g_{i, a\bar b} > C^{-1} \delta_{a\bar b}$, it
  follows from \eqref{eq:b4} that the components $v_i^p$ satisfy
  \[ \int_{B(q_i, \delta)} |\dbar v_i^p|^2 < \Psi(i^{-1}). \]
  On the ball $B(q_i, \delta)$ we can apply the $L^2$-estimate to
  obtain holomorphic functions $\tilde{v}_i^p$ with 
  \[ \label{eq:vipL2} \int_{B(q_i, \delta/2)} |v_i^p - \tilde{v}_i^p|^2 <
    \Psi(i^{-1}). \]
  At the same time, using \eqref{eq:CC1}, we have uniform bounds
  $|v_i^p|, |\tilde{v}_i^p| < C$. Let us denote by
  $\tilde{\sigma}_{i,t}$ the flow of (the real part of) $\tilde{v}_i =
  \tilde{v}_i^p\partial_{z_{ip}}$. As long as $t < T$ for sufficiently
  small $T > 0$, this is well defined on $B(q_i, \delta/3)$, say. Up
  to choosing a subsequence we can assume that the $\tilde{v}_i$
  converge to a holomorphic vector field $\tilde{v}$ on
  $B(q,\delta/3)$, generating a one parameter group of biholomorphisms
  $\tilde{\sigma}_t$. 
  
  We claim that the diffeomorphisms $\sigma_{i,t}$ are approximated by
  $\tilde{\sigma}_{i,t}$ for $t < T$, and therefore the limiting flow
  $\sigma_t$ coincides with the flow $\tilde{\sigma}_t$ by
  biholomorphisms. More precisely, define the
  function
  \[ u(x,t) = |\sigma_{i,t}(x) - \tilde{\sigma}_{i,t}(x)|^2, \]
  for $x\in B(q_i, \delta/3)$ and $t < T$. We will show that we have
  $u(x,t) < \Psi(i^{-1})$ for $x$ outside of a set of measure at most
  $\Psi(i^{-1})$. Note that here we are
  viewing $B(q_i, \delta)$ as a subset of $\mathbf{C}^n$ using our coordinates. 
  Define
  \[ F(t) = \int_{B(q,\delta/3)} u(x,t). \]
  Note that
  \[ \frac{d}{dt} u(x,t) &= 2\big(\sigma_{i,t}(x) -
    \tilde{\sigma}_{i,t}(x)\big)\cdot \big(v_i(\sigma_{i,t}(x)) -
    \tilde{v}_i(\tilde{\sigma}_{i,t}(x)) \big) \\
&= 2 \big(\sigma_{i,t}(x) -
    \tilde{\sigma}_{i,t}(x) \big)\cdot \big(v_i(\sigma_{i,t}(x)) -
    \tilde{v}_i(\sigma_{i,t}(x))+ \tilde{v}_i(\sigma_{i,t}(x)) -
    \tilde{v}_i(\tilde{\sigma}_{i,t}(x)) \big). 
\]
Using that we have a gradient bound for the holomorphic vector field
$\tilde{v}_i$, this implies
\[ \frac{d}{dt} u(x,t) &\leq 2 \sqrt{u(x,t)} |v_i -
  \tilde{v}_i|(\sigma_{i,t}(x)) + C u(x,t) \\
   &\leq C u(x,t) + |v_i - \tilde{v}_i|_{Euc}^2 (\sigma_{i,t}(x)), \]
where we emphasize that the difference $|v_i-\tilde{v}_i|$ is measured
using the Euclidean metric given by our coordinates. 
We can now use the fact that $\sigma_{i,t}$ distorts volumes by at
most $\Psi(i^{-1})$ (see the proof of Theorem 2 in \cite{Liu1}),
together with the estimate \eqref{eq:vipL2} and the uniform bounds for
$v_i^p, \tilde{v}_i^p$ to see that
\[ \frac{d}{dt} F(t) \leq C F(t) + \Psi(i^{-1}). \]
Since $F(0) = 0$, we get $F(t) < \Psi(i^{-1})$ for $t < T$, and from
this it follows in turn that $u(x,t) < \Psi(i^{-1})$ outside of a set
of measure at most $\Psi(i^{-1})$, as required. 

Note that if in terms of the notation above we let $w_i = \nabla\rho_i$, and we let
$\phi_{i,t}$ denote the one-parameter group of diffeomorphisms
generated by $w_i$, then $\phi_{i,t}$ converges to the homothetic
expansion map on $X$, generated by $r\partial_r$ on
$\mathcal{R}_\epsilon$. 
Since $Jw_i=v_i$, it is not hard to see that in the limit we have $Jr\partial_r = v$. 
\end{proof}

The action of $\sigma_t$ extends to an isometric action of a torus $T$
on $X$, which also acts by biholomorphisms on
$\mathcal{R}_\epsilon$. Fixing $d > 0$, let $\mathcal{H}_d$ denote the
vector space of polynomial growth holomorphic functions $f$ on $X$ of
degree at most $d$, i.e. satisfying $|f(x)| < C(1 + d(o,x))^d$ for a
constant $C >0$. Since such $f$ are harmonic, $\mathcal{H}_d$ is
finite dimensional, and decomposes into weight spaces under the
$T$-action. It follows that any $f\in \mathcal{H}_d$ can be decomposed
into  a sum of eigenfunctions $f = f_{\alpha_1} + \ldots + f_{\alpha_m}$ for $\alpha_k\in
\mathrm{Lie}(T)^*$, where $e^{\sqrt{-1}t}\cdot f_{\alpha_k} = e^{\sqrt{-1}\langle
  \alpha_k,t\rangle}f_{\alpha_k}$ for $t\in\mathrm{Lie}(T)$. The
one-parameter group of isometries 
$\sigma_t$ is generated by a vector $\xi\in\mathrm{Lie}(T)$. The
relation  $v=Jr\partial_r$ then implies that $r\partial_r f_{\alpha_k}
= \langle \xi, \alpha_k\rangle f_{\alpha_k}$, i.e. each $f_{\alpha_k}$
is homogeneous. We can therefore embed a neighborhood of $o\in X$ into
$\mathbf{C}^N$ using homogeneous holomorphic functions, and this
embedding naturally extends to an embedding of all of $X$. As in
\cite[Lemma 2.19]{DS2} it follows that the image of $X$ is an affine
variety and by construction the torus $T$ acts linearly. This completes the proof of
Theorem~\ref{thm:main}.

The proof of Corollary~\ref{cor:1} follows exactly as  Proposition
2.21 and Lemma 4.2 of Donaldson-Sun~\cite{DS2}, based on the work of
Martelli-Sparks-Yau~\cite{MSY}.

\end{document}